\documentclass{amsart}

\usepackage[latin1]{inputenc}
\usepackage[english]{babel}
\usepackage{indentfirst}
\usepackage{amssymb}
\usepackage{amsthm}
\usepackage{xcolor}
\usepackage[all]{xy}
\usepackage[mathscr]{eucal}

\newcommand{\impli}{\Rightarrow}
\newcommand{\Nat}{\mathbb{N}}
\newcommand{\N}{\mathbb{N}}

\newcommand{\sub}{\subseteq}
\def\epsilon{\varepsilon}
\newtheorem{theo}{Theorem}
\newtheorem{lem}[theo]{Lemma}
\newtheorem{sublem}[theo]{Sublemma}
\newtheorem{cor}[theo]{Corollary}
\newtheorem{defi}[theo]{Definition}

\newtheorem{exa}[theo]{Example}



\title[A.e. convergent sequences of weak$^*$-to-norm continuous operators]{Almost everywhere convergent sequences of weak$^*$-to-norm continuous operators}

\author{Jos\'{e} Rodr\'{i}guez}
\address{Dpto. de Ingenier\'{i}a y Tecnolog\'{i}a de Computadores\\Facultad de Inform\'{a}tica\\
Universidad de Murcia\\ 30100 Espinardo (Murcia)\\ Spain} \email{joserr@um.es}
\subjclass[2010]{46B28, 46G10}

\keywords{Weak$^*$-to-norm continuous operator; bounded approximation property; Asplund space; Banach space not containing~$\ell_1$; Bourgain property}

\thanks{Research partially supported by {\em Agencia Estatal de Investigaci\'{o}n} [MTM2017-86182-P, grant cofunded by ERDF, EU] 
and {\em Fundaci\'on S\'eneca} [20797/PI/18]}

\begin{document}

\begin{abstract}
Let $X$ and $Y$ be Banach spaces, and $T:X^*\to Y$ be an operator. We prove that if $X$
is Asplund and $Y$ has the approximation property, then for each Radon probability~$\mu$ on~$(B_{X^*},w^*)$ there is
a sequence of $w^*$-to-norm continuous operators $T_n:X^*\to Y$ such that $\|T_n(x^*)-T(x^*)\| \to 0$ for $\mu$-a.e. $x^*\in B_{X^*}$;
if $Y$ has the $\lambda$-bounded approximation property for some $\lambda\geq 1$, then the sequence can be chosen in such a way
that $\|T_n\|\leq \lambda\|T\|$ for all $n\in \N$. The same conclusions 
hold if $X$ contains no subspace isomorphic to~$\ell_1$, $Y$ has the approximation property (resp., $\lambda$-bounded approximation property)
and $T$ has separable range. This extends to the non-separable setting
a result by Mercourakis and Stamati.
\end{abstract}

\maketitle

\section{Introduction}

The celebrated Odell-Rosenthal theorem~\cite{ode-ros} (cf., \cite[Theorem~4.1]{van}) states that every separable Banach space $X$ not
containing subspaces isomorphic to~$\ell_1$ is $w^*$-sequentially dense in its bidual, that is, for every $x^{**}\in X^{**}$
there is a sequence $(x_n)$ in~$X$ which $w^*$-converges to~$x^{**}$; moreover, the sequence can be chosen
in such a way that $\|x_n\|\leq \|x^{**}\|$ for all $n\in \N$. Note that, for an arbitrary Banach space~$X$, 
a functional $x^{**}\in X^{**}$ is $w^*$-continuous if and only if it belongs to~$X$. So, it is natural
to ask about extensions of the Odell-Rosenthal theorem for operators (i.e., linear and continuous maps)
in the following sense: if $X$ and $Y$ are Banach spaces, and
$T:X^{*}\to Y$ is an operator, which conditions do ensure the existence of a sequence of 
$w^*$-to-norm continuous operators $T_n:X^*\to Y$ such that $\|T_n(x^*)-T(x^*)\|\to 0$ 
for every $x^*\in X^*$? This type of question was addressed by Mercourakis and Stamati~\cite{mer-sta}, and later by Kalenda and Spurn\'{y}~\cite{kal-spu}.

Throughout this paper $X$ and $Y$ are Banach spaces. We write $\mathcal{L}(X^*,Y)$ to denote the Banach space of all operators from~$X^*$ to~$Y$, equipped
with the operator norm. The linear subspace of all $w^*$-to-norm continuous operators
from~$X^*$ to~$Y$ is denoted by~$\mathcal{F}_{w^*}(X^*,Y)$. Every 
element of $\mathcal{F}_{w^*}(X^*,Y)$ has finite rank and, in fact, is of the form $x^*\mapsto \sum_{i=1}^n x^*(x_i)y_i$ 
for some $n\in \N$, $x_i\in X$ and $y_i\in Y$.

The following result can be found in \cite[Theorem~2.19]{mer-sta} and is the starting point of this paper:

\begin{theo}[Mercourakis-Stamati]\label{theo:MS}
Suppose that $X^*$ is separable and that $Y$ is separable and has the bounded approximation property. Let $T\in \mathcal{L}(X^*,Y)$. Then  
there is a sequence $(T_n)$ in $\mathcal{F}_{w^*}(X^*,Y)$ such that 
\begin{equation}\label{eqn:SOT}
	\|T_n(x^*)-T(x^*)\| \to 0
	\quad\mbox{for every }x^*\in X^*.
\end{equation} 
\end{theo}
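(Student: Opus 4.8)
The plan is to realise $T$ as a limit, in the strong operator topology, of $w^*$-to-norm continuous operators, by combining two approximation schemes and then extracting a single sequence using the separability of $X^*$.

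First, by homogeneity it suffices to arrange $\|T_n(x^*)-T(x^*)\|\to 0$ for every $x^*\in B_{X^*}$. Fix a sequence $(d_j)_j$ dense in $B_{X^*}$. I claim it is enough to prove the following: there is a constant $M$ such that for every finite set $G\subseteq B_{X^*}$ and every $\delta>0$ there exists $V\in\mathcal{F}_{w^*}(X^*,Y)$ with $\|V\|\le M$ and $\|V(x^*)-T(x^*)\|<\delta$ for all $x^*\in G$. Indeed, applying this with $G=\{d_1,\dots,d_n\}$ and $\delta=1/n$ yields operators $T_n\in\mathcal{F}_{w^*}(X^*,Y)$ with $\|T_n\|\le M$ and $\|T_n(d_j)-T(d_j)\|\to 0$ for each fixed $j$; a standard $3\varepsilon$-argument, using the uniform bound $\sup_n\|T_n\|\le M$ and the norm-density of $(d_j)$, then upgrades this to $\|T_n(x^*)-T(x^*)\|\to 0$ for every $x^*\in B_{X^*}$, hence for every $x^*\in X^*$.

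To produce such a $V$ I use that $Y$ is separable with the bounded approximation property: there is a sequence $(R_k)$ of finite-rank operators on $Y$ with $\lambda:=\sup_k\|R_k\|<\infty$ and $\|R_k(y)-y\|\to 0$ for every $y\in Y$. Consequently $\|R_k(T(x^*))-T(x^*)\|\to 0$ for every $x^*$, while each $S_k:=R_k\circ T$ is a finite-rank operator with range in the finite-dimensional space $F_k:=R_k(Y)$ and $\|S_k\|\le\lambda\|T\|$. Given $G$ and $\delta$, I first choose $k$ so large that $\|S_k(x^*)-T(x^*)\|<\delta$ for the finitely many $x^*\in G$; it then remains to approximate the single finite-rank operator $S_k$ by a $w^*$-to-norm continuous operator that agrees with it on $G$ and whose norm is controlled by $\|S_k\|$.

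This last point is the crux: a finite-rank operator $S_k$ need not lie in $\mathcal{F}_{w^*}(X^*,Y)$, since its adjoint sends $F_k^*$ into $X^{**}$ rather than into $X$, and the difficulty is to push this data back into $X$ while controlling the operator norm. I intend to resolve it with the principle of local reflexivity. Writing $S_k^*\colon F_k^*\to X^{**}$, put $E:=S_k^*(F_k^*)$, a finite-dimensional subspace of $X^{**}$, and fix $\varepsilon>0$. The principle of local reflexivity provides an operator $\phi\colon E\to X$ with $\|\phi\|\le 1+\varepsilon$ and $x^*(\phi(e))=e(x^*)$ for all $e\in E$ and all $x^*$ in the (finite-dimensional) span of $G$. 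Setting $\psi:=\phi\circ S_k^*\colon F_k^*\to X$ and $V:=\psi^*\colon X^*\to F_k^{**}=F_k$, the operator $V$ is $w^*$-to-norm continuous (an adjoint is $w^*$-to-$w^*$ continuous, and on the finite-dimensional space $F_k$ the weak$^*$ and norm topologies coincide), it satisfies $\|V\|=\|\psi\|\le(1+\varepsilon)\|S_k\|\le(1+\varepsilon)\lambda\|T\|$, and a direct computation using the reflexivity relation gives $V(x^*)=S_k(x^*)$ for every $x^*\in G$. Taking $\varepsilon\le 1$ yields $V$ with $\|V\|\le 2\lambda\|T\|$ and $\|V(x^*)-T(x^*)\|<\delta$ on $G$, which is exactly the approximation statement above with $M=2\lambda\|T\|$. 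I expect this local-reflexivity step to be the main obstacle, because the naive alternative---applying the Odell--Rosenthal theorem coordinatewise to the components of $S_k$ in $X^{**}$---produces $w^*$-continuous approximants but with a norm bound depending on $k$, which would destroy the uniform control needed to extract a single sequence.
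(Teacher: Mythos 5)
Your proposal is correct, and it takes a genuinely different route from the paper's. The paper obtains Theorem~\ref{theo:MS} as a corollary of its non-separable result (Theorem~\ref{theo:Main}, via Theorem~\ref{theo:nol1-ap}): the first step there is the same as yours (use the bounded approximation property to reduce to the finite-rank operators $L_n\circ T$), but the crux---approximating a finite-rank operator, whose adjoint lands in $X^{**}$ rather than $X$, by elements of $\mathcal{F}_{w^*}(X^*,Y)$ with norm control---is handled in Lemma~\ref{lem:fd} by identifying $\mathcal{L}(X^*,Y)$ isometrically with the bidual of $Z=X\hat{\otimes}_\epsilon Y$ (Lemma~\ref{lem:bidual}), invoking Goldstine's theorem to get SOT-density of the $w^*$-to-norm continuous contractions, and then extracting a $\mu$-a.e.\ convergent sequence from that density via the Bourgain property (Lemma~\ref{lem:Bourgain}) and Bourgain's extraction theorem (Theorem~\ref{theo:Bourgain}); everywhere-convergence is finally recovered from a.e.-convergence for the discrete measure carried by a dense sequence in $B_{X^*}$. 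You replace all of this machinery by the principle of local reflexivity, which for each finite $G\sub B_{X^*}$ produces $V=\psi^*\in\mathcal{F}_{w^*}(X^*,Y)$ agreeing exactly with $S_k=R_k\circ T$ on ${\rm span}(G)$ and with $\|V\|\leq(1+\epsilon)\lambda\|T\|$ (your computation $\langle V(x^*),f^*\rangle = x^*(\phi(S_k^*f^*))=\langle f^*,S_k(x^*)\rangle$ for $x^*\in {\rm span}(G)$ is right, and $\psi^*$ is indeed $w^*$-to-norm continuous since $F_k^{**}=F_k$ is finite-dimensional); norm-separability of $B_{X^*}$ plus the uniform bound $M=2\lambda\|T\|$ then upgrades finite-set approximation to SOT convergence by the standard $3\epsilon$-argument. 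Conceptually, local reflexivity plays for you the role that Goldstine plus Bourgain's theorem play in the paper: both convert $w^*$-density of $X$ in $X^{**}$ into genuine approximation. What your route buys: it is shorter and more self-contained, avoiding measure theory, the Bourgain property, and injective/projective tensor product duality, and it needs no non-containment-of-$\ell_1$ hypothesis beyond what $X^*$ separable already gives, since local reflexivity holds for every Banach space. What it costs: the finite-set-plus-density scheme is intrinsically tied to norm-separability of $B_{X^*}$, so it cannot yield the $\mu$-a.e.\ convergence of Theorem~\ref{theo:Main} for an arbitrary Radon measure on a non-separable dual ball---that is precisely what the Bourgain-property machinery is for.
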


We stress that the proof of this result given in~\cite{mer-sta} contains a gap which was corrected in \cite[Remark~4.4]{kal-spu}. 
There are examples showing that the conclusion of Theorem~\ref{theo:MS}
can fail if $Y$ does not have the bounded approximation property (see \cite[Example~2.22]{mer-sta} and \cite[Example~2.3]{kal-spu}), or if
the assumption on~$X$ is weakened to being separable without subspaces isomorphic to~$\ell_1$ (see \cite[Theorem~2.30]{mer-sta}).

In this paper we try to extend Theorem~\ref{theo:MS} to the non-separable setting. As 
we will explain below, the separability assumption on~$Y$ can be removed from the statement of Theorem~\ref{theo:MS}.
On the other hand, the result does not work for an arbitrary Asplund space~$X$, since there are
Asplund spaces which are not $w^*$-sequentially dense in its bidual (e.g., $c_0(\Gamma)$ for any uncountable set~$\Gamma$). 
However, any Asplund space~$X$ satisfies 
a weaker property which can be extended succesfully to
the setting of operators, namely: 
\begin{enumerate}
\item[(*)] for each $x^{**}\in B_{X^{**}}$ and each Radon probability $\mu$ on~$(B_{X^*},w^*)$,
there is a sequence $(x_n)$ in~$B_{X}$ such that $\langle x_n,x^* \rangle \to \langle x^{**},x^*\rangle$ for $\mu$-a.e. $x^*\in B_{X^*}$.
\end{enumerate}
Actually, this property characterizes the non-containment of~$\ell_1$, as a consequence
of Haydon's result~\cite{hay-J} that a Banach space~$X$ contains no subspace isomorphic to~$\ell_1$ if and only if
the identity map $i:(B_{X^*},w^*) \to X^*$ is universally Pettis integrable 
if and only if $i$ is universally scalarly measurable (cf., \cite[Theorems~6.8 and~6.9]{van}).

Our main result reads as follows:

\begin{theo}\label{theo:Main}
Let $T\in \mathcal{L}(X^*,Y)$. Suppose that $Y$ has the approximation property and that one of the following conditions holds:
\begin{enumerate}
\item[(i)] $X$ is Asplund;
\item[(ii)] $X$ contains no subspace isomorphic to~$\ell_1$ and $T$ has separable range.
\end{enumerate} 
Let $\mu$ be a Radon probability on~$(B_{X^*},w^*)$. Then there is a sequence $(T_n)$ in $\mathcal{F}_{w^*}(X^*,Y)$ such that 
\begin{equation}\label{eqn:Asplund}
	\|T_n(x^*)-T(x^*)\| \to 0
	\quad\mbox{for }\mu\mbox{-a.e. }x^*\in B_{X^*}.
\end{equation}
If $Y$ has the $\lambda$-bounded approximation property for some $\lambda\geq 1$, then the sequence
can be chosen in such a way that $\|T_n\|\leq \lambda\|T\|$ for all $n\in \N$.
\end{theo}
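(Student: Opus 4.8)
The plan is to reduce the statement to an approximation \emph{in $\mu$-measure} with the prescribed norm control, and then to pass to an a.e.\ convergent subsequence. Concretely, it suffices to produce, for every $\delta,\eta>0$, a single $S\in\mathcal{F}_{w^*}(X^*,Y)$ with $\|S\|\le\lambda\|T\|$ and $\mu(\{x^*\in B_{X^*}:\|S(x^*)-T(x^*)\|>\delta\})<\eta$: taking $\delta=\eta=1/m$ yields operators $S_m$ converging to $T$ in $\mu$-measure with the uniform bound $\|S_m\|\le\lambda\|T\|$, and on the finite measure space $(B_{X^*},\mu)$ a subsequence of $(S_m)$ then converges to $T$ $\mu$-a.e.\ while keeping the bound. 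The foundation for everything is a measurability fact about $\Phi:=T|_{B_{X^*}}:(B_{X^*},w^*)\to Y$. Both hypotheses (i) and (ii) force $X$ to contain no copy of~$\ell_1$ (an Asplund space cannot contain~$\ell_1$), so by Haydon's theorem every $x^{**}\in X^{**}$ is $\mu$-measurable on $(B_{X^*},w^*)$; applying this to $x^{**}=T^*\phi$ for $\phi\in Y^*$ shows that $\Phi$ is weakly $\mu$-measurable. Together with essential separability of the range of~$\Phi$ --- immediate in case (ii), and in case (i) a consequence of the fragmentability of $(B_{X^*},w^*)$, under which a Radon measure is concentrated on a norm-separable set --- Pettis's measurability theorem gives that $\Phi$ is strongly $\mu$-measurable.

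Fix $\delta,\eta>0$. First I would invoke Lusin's theorem for the strongly measurable map $\Phi$: there is a $w^*$-compact set $E\subseteq B_{X^*}$ with $\mu(B_{X^*}\setminus E)<\eta$ such that $\Phi|_E$ is $w^*$-to-norm continuous, whence $T(E)=\Phi(E)$ is norm-compact in~$Y$. The approximation property of~$Y$ (resp.\ the $\lambda$-bounded approximation property) then supplies a finite-rank operator $R:Y\to Y$ (resp.\ one with $\|R\|\le\lambda$) such that $\|R(y)-y\|<\delta$ for all $y\in T(E)$. Thus the finite-rank operator $R\circ T$ satisfies $\|R\circ T\|\le\lambda\|T\|$ and $\|(R\circ T)(x^*)-T(x^*)\|<\delta$ for every $x^*\in E$. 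It remains to approximate the single finite-rank operator $Q:=R\circ T$, viewed into its finite-dimensional range $F:=R(Y)\subseteq Y$, by $w^*$-to-norm continuous operators, in $\mu$-measure and with norm at most $\lambda\|T\|$.

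For this last step --- the heart of the argument, and the place where the norm control is won --- I would argue by biduality and Mazur's lemma rather than approximate the coordinate functionals one at a time (which destroys the operator norm). Since $F$ is finite-dimensional there is a canonical isometric identification $\mathcal{L}(F^*,X)^{**}=\mathcal{L}(F^*,X^{**})$, under which $Q^*\in\mathcal{L}(F^*,X^{**})$ has $\|Q^*\|=\|Q\|\le\lambda\|T\|$. By Goldstine's theorem there is a net $(V_\alpha)$ in $\mathcal{L}(F^*,X)$ with $\|V_\alpha\|\le\lambda\|T\|$ converging to $Q^*$ in the weak$^*$ topology; testing against the predual elements $\phi\otimes x^*_h$, where $x^*_h:=\int_{B_{X^*}}h\,x^*\,d\mu\in X^*$ for $h\in L^\infty(\mu)$, shows that for each $\phi\in F^*$ the bounded functions $\langle V_\alpha\phi,\cdot\rangle$ converge to $\langle Q^*\phi,\cdot\rangle$ weakly in $L^1(\mu)$ (note $\langle Q^*\phi,\cdot\rangle\in L^\infty(\mu)$ by the measurability above). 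Fixing a basis $\phi_1,\dots,\phi_k$ of $F^*$ and applying Mazur's lemma in $L^1(\mu)^k$, I obtain a finite convex combination $W$ of the $V_\alpha$ --- so $W\in\mathcal{L}(F^*,X)$ with $\|W\|\le\lambda\|T\|$ --- for which $\sum_{j}\|\langle W\phi_j,\cdot\rangle-\langle Q^*\phi_j,\cdot\rangle\|_{L^1(\mu)}$ is as small as desired. The operator $Q_W(x^*):=\sum_{j}\langle W\phi_j,x^*\rangle f_j$ (with $f_1,\dots,f_k\in F$ dual to $\phi_1,\dots,\phi_k$) then lies in $\mathcal{F}_{w^*}(X^*,Y)$, satisfies $\|Q_W\|=\|W\|\le\lambda\|T\|$, and by Chebyshev's inequality differs from $Q$ by more than $\delta$ only on a set of small $\mu$-measure. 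Combining this with the estimate $\|Q(x^*)-T(x^*)\|<\delta$ on~$E$ produces the operator $S=Q_W$ required by the reduction, completing the scheme.

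The main obstacle I anticipate is the measurability input in case (i): showing that the weakly measurable map $\Phi$ has essentially separable range, i.e.\ that the Radon measure~$\mu$ on the fragmentable compact $(B_{X^*},w^*)$ is concentrated on a norm-separable set --- this is what turns Asplundness into the strong measurability needed for Lusin's theorem. The second genuinely non-routine point is the biduality-plus-Mazur passage, which is precisely what upgrades the coordinatewise density coming from property~(*)/Goldstine to an \emph{in-measure} approximation \emph{without} inflating the operator norm beyond $\lambda\|T\|$; when $Y$ is only assumed to have the approximation property one simply drops the norm bounds on $R$ and $W$ throughout, and the same argument yields the first (unnormed) conclusion.
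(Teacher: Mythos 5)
Your proposal is correct in substance, but its engine is genuinely different from the paper's. The outer skeleton coincides: first establish strong $\mu$-measurability of $T|_{B_{X^*}}$ (Haydon's theorem plus Pettis' measurability theorem in case (ii), as in Lemma~\ref{lem:stronglymeasurableoperators}; in case (i) your claim that a Radon measure on the fragmentable compact $(B_{X^*},w^*)$ is concentrated on a norm-separable set is exactly the Schwartz-type result the paper cites, so both proofs lean on the same nontrivial input there), then use tightness/Lusin and the (bounded) approximation property to replace $T$ by the finite-rank operator $Q=R\circ T$ up to $\delta$ on a set of almost full measure, then take limits. The divergence is in approximating the finite-rank $Q$ by elements of $\mathcal{F}_{w^*}(X^*,Y)$ of norm at most $\lambda\|T\|$: the paper (Lemma~\ref{lem:fd}) extracts an a.e.-convergent sequence directly from W$^*$OT-density (Goldstine, via Lemma~\ref{lem:bidual}) using the Bourgain property of $\{S\in\mathcal{F}_{w^*}(X^*,F):\|S\|\le 1\}$ (Lemma~\ref{lem:Bourgain}) together with Bourgain's Theorem~\ref{theo:Bourgain}, so that non-containment of $\ell_1$ enters through Rosenthal's theorem for bounded subsets of $C(B_{X^*},w^*)$; you settle for approximation \emph{in $\mu$-measure}, obtained from Goldstine in $\mathcal{L}(F^*,X)^{**}=\mathcal{L}(F^*,X^{**})$ (for finite-dimensional $F$ this identification is the same tensor duality used in Lemma~\ref{lem:bidual}), followed by Mazur's lemma --- whose convex combinations are what preserve the bound $\lambda\|T\|$ --- and Chebyshev; correspondingly, your final limiting step (an a.e.-convergent subsequence of an in-measure convergent sequence) replaces the paper's double-limit Lemma~\ref{lem:double}. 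Each route buys something: yours trades the Bourgain-property machinery for soft convexity arguments plus Pettis-integral duality; the paper's yields a.e.\ convergence in one shot and needs no vector integration theory.

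One step of yours must be made explicit, although it is not a gap in substance. Testing the weak$^*$ convergence $V_\alpha\to Q^*$ against $\phi\otimes x^*_h$ gives $\int h\,\langle V_\alpha\phi,\cdot\rangle\,d\mu\to\langle Q^*\phi,x^*_h\rangle$; to identify this limit with $\int h\,\langle Q^*\phi,\cdot\rangle\,d\mu$ (which is what weak $L^1(\mu)$-convergence requires) you need $x^*_h$ to be the \emph{Pettis} integral, not merely the Gelfand integral, of the map $x^*\mapsto h(x^*)\,x^*$, i.e.\ $\langle x^{**},x^*_h\rangle=\int h\,\langle x^{**},\cdot\rangle\,d\mu$ for \emph{all} $x^{**}\in X^{**}$ and not only for elements of~$X$. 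The $\mu$-measurability of $\langle Q^*\phi,\cdot\rangle$ invoked in your parenthetical does not give this identity. What saves the argument is the Pettis-integrability half of Haydon's theorem, stated in the paper's introduction: since $X$ contains no copy of $\ell_1$ in either case (i) or (ii), the identity map $i:(B_{X^*},w^*)\to X^*$ is universally Pettis integrable, and applying this to the Radon measures $h\,d\mu$ (for $0\le h\le 1$, then by linearity) yields exactly the required identity. With that citation added, your scheme is complete.
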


To see why Theorem~\ref{theo:Main} implies Theorem~\ref{theo:MS}, observe 
that if $X^*$ is separable and $(x_k^*)$ is a norm-dense sequence in~$B_{X^*}$, then
the formula 
$$
	\mu(A):=\sum_{x_k^*\in A}2^{-k}, \qquad A \in {\rm Borel}(B_{X^*},w^*),
$$ 
defines a Radon probability on $(B_{X^*},w^*)$ such that 
conditions~\eqref{eqn:SOT} and~\eqref{eqn:Asplund} are equivalent
for any norm-bounded sequence $(T_n)$ in~$\mathcal{L}(X^*,Y)$ and any $T\in \mathcal{L}(X^*,Y)$. Note that this argument does
not require that $Y$ is separable, hence the conclusion of Theorem~\ref{theo:MS} still holds
if the separability assumption on~$Y$ is dropped. Anyway, an operator which can be approximated as in 
Theorem~\ref{theo:MS} has separable range.

The proof of Theorem~\ref{theo:Main} is given in Section~\ref{section:proofs} as a consequence of a more general result, 
see Theorem~\ref{theo:nol1-ap}. The difference between cases (i) and~(ii) of Theorem~\ref{theo:Main}
lies in Schwartz's theorem that a Banach space~$X$ is Asplund
if and only if the identity map $i:(B_{X^*},w^*)\to X^*$ is universally strongly measurable
(see, e.g., \cite[Corollary 7.8.7(a)]{bou-J}). As we will show in Example~\ref{exa:JT}, the conclusion of Theorem~\ref{theo:Main} can fail 
for an arbitrary $T\in \mathcal{L}(X^*,Y)$ when $X$ contains no subspace isomorphic to~$\ell_1$ but it is not Asplund.

\subsubsection*{Notation and terminology}

Throughout this paper all topological spaces are Hausdorff and all Banach spaces are real. 

Given a non-empty set $\Gamma$ and a topological space~$S$, we denote by $S^\Gamma$
the set of all functions from~$\Gamma$ to~$S$, equipped with the product topology
$\mathfrak{T}_p$, i.e., the topology of pointwise convergence on~$\Gamma$.

By an {\em operator} we mean a continuous linear map between Banach spaces.
By a {\em subspace} of a Banach space we mean a norm-closed linear subspace.
The topological dual of a Banach space~$X$ is denoted by~$X^*$ and
we write $w^*$ for its weak$^*$-topology. The evaluation of a functional $x^*\in X^*$
at $x\in X$ is denoted by either $\langle x,x^*\rangle$ or $\langle x^*,x\rangle$.
The norm of~$X$ is denoted by $\|\cdot\|$ or $\|\cdot\|_X$.
We write $B_X$ to denote the closed unit ball of~$X$, i.e., $B_{X}=\{x\in X:\|x\|\leq 1\}$. 
Recall that $X$ is said to be {\em Asplund} if every separable subspace of~$X$ has separable dual or, equivalently,
$X^*$ has the Radon-Nikod\'{y}m property (see, e.g., \cite[p.~198]{die-uhl-J}).

A Banach space $Y$ is said to have the {\em approximation property} if for each norm-compact set $C \sub Y$ and each $\epsilon>0$
there is a finite rank operator $R:Y \to Y$ such that $\|R(y)-y\|\leq \epsilon$ for all $y\in C$; if $R$ can be chosen in such a way that
$\|R\| \leq \lambda$ for some constant $\lambda\geq 1$ (independent of~$C$ and~$\epsilon$), then $Y$ is said to have
the {\em $\lambda$-bounded approximation property}. A Banach space has the
{\em bounded approximation property} if it has the $\lambda$-bounded approximation property for some $\lambda\geq 1$. 

The set of all Radon probabilities on~$(B_{X^*},w^*)$ is denoted by $P(B_{X^*})$;
that is, an element $\mu\in P(B_{X^*})$ is a probability measure defined
on ${\rm Borel}(B_{X^*},w^*)$ (the Borel $\sigma$-algebra of $(B_{X^*},w^*)$)
which is inner regular with respect to $w^*$-compact sets, in the sense that 
$\mu(A)=\sup\{\mu(K):\, K \sub A, \, \text{$K$ is $w^*$-compact}\}$ for every $A\in {\rm Borel}(B_{X^*},w^*)$.

\section{Results}\label{section:proofs}

\subsection{Strongly measurable functions and operators}
This subsection contains some auxiliary measure-theoretic results.

\begin{defi}\label{defi:conditions}
Let $(\Omega,\Sigma,\mu)$ be a probability space and $f: \Omega\to Y$ be a function. We say that $f$ 
\begin{enumerate}
\item[(i)] is {\em simple} if it can be written as $f=\sum_{i=1}^ my_i\chi_{A_i}$, where $m\in \Nat$, $y_i\in Y$ and 
$\chi_{A_i}$ is the characteristic function of~$A_i\in \Sigma$;
\item[(ii)] is {\em strongly $\mu$-measurable} if there is a sequence $f_n:\Omega\to Y$ of simple functions such that
$\|f_n(\omega)- f(\omega)\|\to 0$ for $\mu$-a.e. $\omega \in \Omega$;
\item[(iii)] has {\em $\mu$-essentially separable range} if there is $B\in \Sigma$
with $\mu(B)=1$ such that $f(B)$ is separable.
\end{enumerate}
\end{defi}

Clearly, every strongly $\mu$-measurable function has $\mu$-essentially separable range. In fact,
we have the following stronger statement which belongs to the folklore. 

\begin{lem}\label{lem:tight}
Let $(\Omega,\Sigma,\mu)$ be a probability space and $f: \Omega\to Y$ be a strongly $\mu$-measurable function.
Then for every $\epsilon>0$ there is $U\in \Sigma$ such that $\mu(U)\geq 1-\epsilon$
and $f(U)$ is relatively norm-compact.
\end{lem}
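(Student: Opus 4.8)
The plan is to combine Egorov's theorem with the finite-range structure of simple functions. Since $f$ is strongly $\mu$-measurable, I begin by fixing a sequence $(f_n)$ of simple functions with $\|f_n(\omega)-f(\omega)\|\to 0$ for $\mu$-a.e. $\omega\in\Omega$. The first point to record is that each scalar function $\omega\mapsto \|f_n(\omega)-f(\omega)\|$ is $\Sigma$-measurable: the difference $f_n-f$ is again strongly $\mu$-measurable, and composing with the (continuous) norm produces a real-valued strongly $\mu$-measurable, hence measurable, function. This is exactly the measurability needed to feed the sequence into the classical Egorov theorem.

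Next I would apply Egorov's theorem on the probability (in particular, finite measure) space $(\Omega,\Sigma,\mu)$ to the $\mu$-a.e. convergent sequence $(f_n)$. For the given $\epsilon>0$ this yields $U\in\Sigma$ with $\mu(U)\geq 1-\epsilon$ on which the convergence is uniform, that is,
$$
\sup_{\omega\in U}\|f_n(\omega)-f(\omega)\| \to 0 .
$$
Any $\mu$-null set on which the pointwise convergence fails is automatically absorbed into $\Omega\setminus U$, so no separate handling of the exceptional null set is required.

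It then remains to show that $f(U)$ is relatively norm-compact. Because $Y$ is complete, it suffices to prove that $f(U)$ is totally bounded. Given $\eta>0$, I use the uniform convergence to choose $n\in\N$ with $\sup_{\omega\in U}\|f_n(\omega)-f(\omega)\|\leq \eta$. The range $f_n(U)$ is contained in the finite set of values taken by the simple function $f_n$; hence it is a finite $\eta$-net for $f(U)$, since every $f(\omega)$ with $\omega\in U$ lies within distance $\eta$ of the point $f_n(\omega)\in f_n(U)$. As $\eta>0$ was arbitrary, $f(U)$ is totally bounded, and therefore relatively norm-compact.

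The only step that is at all delicate is the vector-valued Egorov argument, and even this is routine once one observes the measurability of $\omega\mapsto\|f_n(\omega)-f(\omega)\|$, which reduces matters to the scalar Egorov theorem applied to that sequence. Everything else is the standard observation that a uniform limit of finite-range functions has totally bounded range; I therefore expect no substantial obstacle, consistent with the lemma being folklore.
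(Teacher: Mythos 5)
Your proof is correct, but it follows a genuinely different route from the paper's. The paper restricts $f$ to a full-measure set $B\in\Sigma$ on which the simple functions converge everywhere, observes that $f|_B$ is then Borel measurable with separable range, pushes $\mu$ forward to a Borel probability $\nu$ on the Polish space $\overline{f(B)}$, and invokes the fact that Borel probabilities on Polish spaces are inner regular with respect to compact sets; the set $U$ is the preimage of a compact $K$ with $\nu(K)\geq 1-\epsilon$. You instead apply Egorov's theorem to get uniform convergence of the simple functions on a set $U$ of measure at least $1-\epsilon$, and then use the finite range of each simple function to produce finite $\eta$-nets for $f(U)$, so that $f(U)$ is totally bounded and hence relatively compact by completeness of $Y$. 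Your argument is more elementary: it needs only the scalar Egorov theorem and the definition of strong measurability, avoiding regularity theory for measures on Polish spaces. The paper's Lusin-type argument, on the other hand, exhibits $f(U)$ inside a single compact set and works verbatim for any Borel measurable function with separable range into a complete metric space, without reference to an approximating sequence of simple functions. One small point of bookkeeping in your write-up: the function $\|f_n(\cdot)-f(\cdot)\|$ is a priori only measurable with respect to the completion of $\mu$, so to guarantee $U\in\Sigma$ (as the lemma asserts) you should run Egorov on the trace $\sigma$-algebra of a set $B\in\Sigma$ of full measure on which the convergence holds pointwise everywhere --- exactly the set your ``absorbed null set'' remark implicitly discards; this is routine and does not affect the validity of the argument.
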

\begin{proof} There exist $B\in \Sigma$ with $\mu(B)=1$
and a sequence  $f_n:\Omega\to Y$ of simple functions such that
$\|f_n(\omega)- f(\omega)\|\to 0$ for every $\omega \in B$. 
Let $\Sigma_B:=\{A\cap B:A\in \Sigma\}$ be the trace $\sigma$-algebra on~$B$.
Then the restriction $f|_B$ is $\Sigma_B$-${\rm Borel}(Y,{\rm norm})$-measurable
(see, e.g., \cite[Proposition~8.1.10]{coh-J}). Since $f(B)$ is separable, its norm-closure 
$T:=\overline{f(B)}$, equipped with the norm topology,
is a Polish space. Define a probability $\nu$ on ${\rm Borel}(T)$ by 
the formula $\nu(C):=\mu((f|_B)^{-1}(C))$ for all $C\in {\rm Borel}(T)$.
The fact that $T$ is Polish implies that $\nu$ is regular (see, e.g., \cite[Proposition~8.1.12]{coh-J})
and so we can find a norm-compact set $K \sub T$ such that $\nu(K)\geq 1-\epsilon$. 
The set $U:= (f|_B)^{-1}(K)\in \Sigma_B$ satisfies the required properties. 
\end{proof}

The following lemma generalizes the well known fact that strong $\mu$-measurability is preserved by 
$\mu$-a.e. limits of sequences. 

\begin{lem}\label{lem:double}
Let $(\Omega,\Sigma,\mu)$ be a probability space.
For each $k,n\in \mathbb{N}$, let $f_{k,n}:\Omega\to Y$ be a strongly $\mu$-measurable function.
Suppose that:
\begin{enumerate}
\item[(i)] for each $n\in \mathbb{N}$ there is a function $f_n:\Omega \to Y$
such that 
$$
	\lim_{k\to \infty}\|f_{k,n}(\omega)- f_n(\omega)\|= 0
	\quad\text{for $\mu$-a.e. $\omega\in \Omega$;}
$$
\item[(ii)] there is a function $f:\Omega \to Y$ such that
$$
	\lim_{n\to\infty}\|f_n(\omega)-f(\omega)\|= 0
	\quad\text{for $\mu$-a.e. $\omega\in \Omega$.}
$$
\end{enumerate} 
Then $f$ and each $f_n$ are strongly $\mu$-measurable and there is a sequence $(k_n)$ in~$\mathbb{N}$ such that 
$$
	\lim_{n\to\infty}\|f_{k_n,n}(\omega)-f(\omega)\|=0
	\quad\text{for $\mu$-a.e. $\omega\in \Omega$.}
$$
\end{lem}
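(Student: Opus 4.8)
The plan is to handle the strong-measurability assertions by the preservation fact recalled just above the statement, and then to produce the diagonal sequence $(k_n)$ by a Borel--Cantelli argument in which the convergence $f_n\to f$ is left untouched while the convergence $f_{k,n}\to f_n$ is accelerated in the variable~$k$.

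First I would record the measurability. Each $f_{k,n}$ is strongly $\mu$-measurable and, by~(i), $f_{k,n}\to f_n$ $\mu$-a.e. as $k\to\infty$; since strong $\mu$-measurability is preserved under $\mu$-a.e. limits of sequences, each $f_n$ is strongly $\mu$-measurable, and applying the same fact to~(ii) makes $f$ strongly $\mu$-measurable. This also guarantees that $\omega\mapsto\|g(\omega)-h(\omega)\|$ is $\Sigma$-measurable for any two functions $g,h$ among these, so that all the sets appearing below belong to~$\Sigma$.

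Next I would build the diagonal. Fixing $n$, the a.e. convergence $\|f_{k,n}-f_n\|\to0$ on the finite space $(\Omega,\Sigma,\mu)$ forces convergence in measure, so
$$ \lim_{k\to\infty}\mu\big(\{\omega\in\Omega:\|f_{k,n}(\omega)-f_n(\omega)\|>\tfrac1n\}\big)=0, $$
and I can therefore choose $k_n\in\N$ so that $E_n:=\{\omega\in\Omega:\|f_{k_n,n}(\omega)-f_n(\omega)\|>\tfrac1n\}$ satisfies $\mu(E_n)\leq 2^{-n}$. Then $\sum_n\mu(E_n)<\infty$, so the Borel--Cantelli lemma gives $\mu(\limsup_n E_n)=0$; hence for $\mu$-a.e. $\omega$ one has $\|f_{k_n,n}(\omega)-f_n(\omega)\|\leq\tfrac1n$ for all large~$n$, i.e. $\|f_{k_n,n}-f_n\|\to0$ $\mu$-a.e. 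Combining this with~(ii) via the triangle inequality
$$ \|f_{k_n,n}(\omega)-f(\omega)\|\leq\|f_{k_n,n}(\omega)-f_n(\omega)\|+\|f_n(\omega)-f(\omega)\| $$
yields the desired $\|f_{k_n,n}-f\|\to0$ for $\mu$-a.e. $\omega$.

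The subtle point, and the step I expect to require care, is exactly this choice of $k_n$: one cannot simply quote that ``convergence in measure admits an a.e.-convergent subsequence'', because that would control only a subsequence of $(f_{k_n,n})$ rather than the full sequence indexed by~$n$. The resolution is to make the error sets $E_n$ summable purely by enlarging $k_n$ in the $k$-direction, leaving the given (and non-accelerable) convergence $f_n\to f$ to supply the second half of the estimate.
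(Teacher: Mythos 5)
Your proof is correct. The core diagonal extraction is essentially the one in the paper: in both arguments one chooses $k_n$ so that the set where $\|f_{k_n,n}-f_n\|>\tfrac{1}{n}$ has measure at most $2^{-n}$, and then concludes on the set of points that eventually avoid these bad sets. You reach that choice via convergence in measure plus Borel--Cantelli, whereas the paper applies Egorov's theorem to $(\|f_{k,n}(\cdot)-f_n(\cdot)\|)_{k}$ to get uniform smallness on a set $U_n$ with $\mu(\Omega\setminus U_n)\le 2^{-n}$; the two devices are interchangeable here, and yours is marginally more elementary. The genuine structural difference is in the measurability bookkeeping: you quote the preservation of strong $\mu$-measurability under $\mu$-a.e.\ limits as a known fact, while the paper does not assume it and instead derives it inside the proof --- it first runs the whole argument in the particular case where every $f_{k,n}$ is simple (there the strong measurability of $f_n$, hence the $\mu$-measurability of $\|f_{k,n}(\cdot)-f_n(\cdot)\|$, is immediate from the definition of strong measurability), extracts the preservation fact as a corollary, and then repeats the argument in general. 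Your shortcut is legitimate, since that fact is standard and the paper itself calls it ``well known''; what it buys is brevity, at the cost of an external citation, whereas the paper's two-stage bootstrap is self-contained and exhibits the lemma as genuinely generalizing (rather than resting on) the classical preservation result.
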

\begin{proof} We begin by proving a particular case.

{\em Particular case.} Assume first that each $f_{k,n}$ is simple. Fix $n\in \N$. By~(i), $f_n$ is strongly $\mu$-measurable.
Therefore, for every $k\in \N$ the diffference $f_{k,n}-f_n$ is strongly $\mu$-measurable and so the real-valued
function $\|f_{k,n}(\cdot)- f_n(\cdot)\|$ is $\mu$-measurable.
Egorov's theorem applied to the sequence 
$(\|f_{k,n}(\cdot)- f_n(\cdot)\|)_{k\in \N}$ ensures the existence of~$U_n\in \Sigma$ with $\mu(\Omega \setminus U_n)\leq 2^{-n}$
such that $\|f_{k,n}(\cdot)- f_n(\cdot)\|\to 0$ uniformly on~$U_n$ as $k\to \infty$; in particular, we can choose $k_n\in \N$ such that 
\begin{equation}\label{eqn:double}
	\sup_{\omega\in U_n}\|f_{k_n,n}(\omega)-f_n(\omega)\|\leq \frac{1}{n}.
\end{equation}

Define $U:=\bigcup_{m\in\N}\bigcap_{n\geq m}U_n \in \Sigma$, so that
$\mu(U)=1$. Now, we use~(ii) to pick $V\in \Sigma$ with $\mu(V)=1$ such that $\|f_n(\omega)-f(\omega)\|\to 0$ for every $\omega\in V$.
Then $\mu(U\cap V)=1$ and, bearing in mind~\eqref{eqn:double}, we deduce that 
$$
	\lim_{n\to\infty}\|f_{k_n,n}(\omega)-f(\omega)\|=0
	\quad\text{for every $\omega\in U\cap V$}.
$$
In particular, $f$ is strongly $\mu$-measurable. This finishes the proof of the {\em Particular case}. Note that, as a consequence, 
we also deduce the following fact.

{\sc Fact.} {\em Let $g_n:\Omega \to Y$ be a sequence of strongly $\mu$-measurable functions
and let $g:\Omega \to Y$ be a function. If $\lim_{n\to \infty}\|g_{n}(\omega)- g(\omega)\|= 0$
for $\mu$-a.e. $\omega\in \Omega$, then $g$ is strongly $\mu$-measurable.}

Now, in order to prove the lemma in full generality, it suffices to repeat the argument of the {\em Particular case} bearing
in mind that each $f_n$ is strongly $\mu$-measurable, by~(i) and the previous {\sc Fact}. This finishes the proof.
\end{proof}

Our next lemma deals with some measurability properties of operators. Before the proof we need to recall some facts: 
\begin{enumerate}
\item[(a)] Given a compact topological space~$K$, it is known that every norm continuous function $f:K\to Y$ is universally strongly measurable, i.e., it
is strongly measurable with respect to any Radon probability on~$K$; in fact, the same holds for a weakly continuous function, see \cite[Proposition~4]{ari-alt}.
This implies that for every $T\in \mathcal{F}_{w^*}(X^*,Y)$ the restriction $T|_{B_{X^*}}$ is strongly $\mu$-measurable
for any $\mu\in P(B_{X^*})$.

\item[(b)] Given a probability space $(\Omega,\Sigma,\mu)$ and a function $f:\Omega\to Y$, Pettis' measurability theorem (see, e.g., \cite[p.~42, Theorem~2]{die-uhl-J})
states that $f$ is strongly $\mu$-measurable if (and only if) it has
$\mu$-essentially separable range and for every $y^*\in Y^*$ the composition $y^*\circ f:\Omega\to \mathbb{R}$ is $\mu$-measurable.
\end{enumerate}

\begin{lem}\label{lem:stronglymeasurableoperators}
Let $T\in \mathcal{L}(X^*,Y)$ and $\mu\in P(B_{X^*})$. Let us consider the following conditions:
\begin{enumerate}
\item[(i)] there is a sequence $(T_n)$ in $\mathcal{F}_{w^*}(X^*,Y)$ such that
$\|T_n(x^*)-T(x^*)\|\to 0$ for $\mu$-a.e. $x^*\in B_{X^*}$; 
\item[(ii)] $T|_{B_{X^*}}$ is strongly $\mu$-measurable;
\item[(iii)] $T|_{B_{X^*}}$ has $\mu$-essentially separable range.
\end{enumerate}
Then (i)$\impli$(ii)$\impli$(iii). If $X$ contains no subspace isomorphic to~$\ell_1$, then (ii) and (iii) are equivalent.
\end{lem}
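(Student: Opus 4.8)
The plan is to dispatch the two unconditional implications, which follow formally from the facts recalled before the statement, and then concentrate on the reverse implication (iii)$\impli$(ii) under the hypothesis that $X$ contains no subspace isomorphic to~$\ell_1$; this last step is where the only real content lies.

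For (i)$\impli$(ii): each $T_n\in\mathcal{F}_{w^*}(X^*,Y)$, so by fact~(a) the restriction $T_n|_{B_{X^*}}$ is strongly $\mu$-measurable. Since $\|T_n(x^*)-T(x^*)\|\to 0$ for $\mu$-a.e.\ $x^*\in B_{X^*}$, the {\sc Fact} established in the proof of Lemma~\ref{lem:double} shows that the $\mu$-a.e.\ limit $T|_{B_{X^*}}$ is again strongly $\mu$-measurable. The implication (ii)$\impli$(iii) is immediate, being the observation made right after Definition~\ref{defi:conditions} that a strongly $\mu$-measurable function has $\mu$-essentially separable range.

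Now assume $X$ contains no subspace isomorphic to~$\ell_1$. As (ii)$\impli$(iii) always holds, only (iii)$\impli$(ii) remains. I would apply Pettis' measurability theorem (fact~(b)): by~(iii) the function $T|_{B_{X^*}}$ has $\mu$-essentially separable range, so it suffices to verify that $y^*\circ T|_{B_{X^*}}$ is $\mu$-measurable for every $y^*\in Y^*$. The point is that $y^*\circ T$ is a continuous linear functional on~$X^*$, i.e.\ an element of~$X^{**}$, and hence $y^*\circ T|_{B_{X^*}}$ coincides with the composition $(y^*\circ T)\circ i$ of the identity map $i:(B_{X^*},w^*)\to X^*$ with $y^*\circ T\in X^{**}$. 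By Haydon's theorem quoted in the introduction, the non-containment of~$\ell_1$ is equivalent to $i$ being universally scalarly measurable, so $(y^*\circ T)\circ i$ is $\mu$-measurable. Pettis' theorem then gives that $T|_{B_{X^*}}$ is strongly $\mu$-measurable.

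The main obstacle, insofar as there is one, is the step (iii)$\impli$(ii): everything there reduces through Pettis' theorem to scalar measurability of the functionals $y^*\circ T$ along the evaluation map~$i$, and recognizing that this requirement is precisely governed by the universal scalar measurability of~$i$ --- equivalently, by Haydon's characterization of the non-containment of~$\ell_1$ --- is the key idea. The forward implications, by contrast, are direct applications of the measurability facts recalled above.
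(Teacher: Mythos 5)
Your proposal is correct and follows essentially the same route as the paper's own proof: (i)$\impli$(ii) via fact~(a) together with the a.e.-limit stability of strong measurability from Lemma~\ref{lem:double}, (ii)$\impli$(iii) as an immediate consequence of the definitions, and (iii)$\impli$(ii) by combining Haydon's characterization of the non-containment of~$\ell_1$ (applied to $y^*\circ T\in X^{**}$ for each $y^*\in Y^*$) with Pettis' measurability theorem. There is nothing to add or correct.
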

\begin{proof} The implication (i)$\impli$(ii) follows from Lemma~\ref{lem:double} and comment~(a) above.
As we have already mentioned, (ii)$\impli$(iii) holds for arbitrary functions.

The fact that (iii)$\impli$(ii) if $X$ contains no subspace isomorphic to~$\ell_1$
is essentially contained in \cite[Example~2.4]{rod-san} and can be deduced as follows. For each 
$y^*\in Y^*$ we have $y^*\circ T\in X^{**}$ and so the non-containment of~$\ell_1$ 
implies that $y^*\circ T|_{B_{X^*}}$ is $\mu$-measurable,
according to the result of Haydon~\cite{hay-J} mentioned in the introduction (cf., \cite[Theorems~6.8 and~6.9]{van} or \cite[Corollary~4.19]{mus3}).
By Pettis' measurability theorem (see~comment (b) above), $T|_{B_{X^*}}$ is strongly $\mu$-measurable if (and only
if) it has $\mu$-essentially separable range.
\end{proof}

\subsection{The Bourgain property}

This terminology refers to a kind of ``controlled oscillation'' property invented by Bourgain~\cite{bou-J-2} for families of real-valued functions defined on a probability space. 
Riddle and Saab~\cite{rid-saa} used it to study certain aspects of the Pettis integral theory
of Banach space-valued functions (cf., \cite[Chapter~4]{mus3} and~\cite{cas-rod}).
Here we will need an extension of the Bourgain property
for families of functions taking values in a metric space. 

Given a probability space $(\Omega,\Sigma,\mu)$, we write $\Sigma^+:=\{A\in \Sigma:\mu(A)>0\}$
and for each $A\in \Sigma^+$ we write $\Sigma^+_A:=\{B\in \Sigma^+: B \sub A\}$.

\begin{defi}\label{defi:Bourgain}
Let $(\Omega,\Sigma,\mu)$ be a probability space, $(M,d)$ be a metric space, and $\mathcal{H} \sub M^\Omega$ be
a family of functions from~$\Omega$ to~$M$.
We say that $\mathcal{H}$ has the \emph{Bourgain property} with respect to~$\mu$ if for each $\epsilon>0$ and each $A \in \Sigma^+$ 
there is a finite set $\mathcal{B} \sub \Sigma^+_A$ such that 
$$
	\min_{B\in \mathcal{B}}\, {\rm osc}(h,B) \leq \epsilon
	\quad
	\text{for every $h \in \mathcal{H}$,}
$$
where ${\rm osc}(h,B):=\sup\{d(h(\omega),h(\omega')):\omega,\omega'\in B\}$
is the oscillation of~$h$ on~$B$.
\end{defi}

We will need the following striking result of Bourgain, see \cite[Theorem~11]{rid-saa}
(cf., \cite[Proposition~4.13]{mus3}). The original statement deals
with real-valued functions but the proof can be adapted straightforwardly to the case 
of functions with values in a metric space (and so we omit it).

\begin{theo}[Bourgain]\label{theo:Bourgain}
Let $(\Omega,\Sigma,\mu)$ be a probability space, $(M,d)$ be a metric space, $\mathcal{H} \sub M^\Omega$
be a family having the Bourgain property with respect to~$\mu$, and $g\in M^\Omega$. If $g$ belongs to the $\mathfrak{T}_{p}$-closure of~$\mathcal{H}$, 
then there is a sequence $(h_n)$ in~$\mathcal{H}$ such that 
$$
	d(h_n(\omega),g(\omega))\to 0 
	\quad\text{for $\mu$-a.e. $\omega\in \Omega$.}
$$
\end{theo}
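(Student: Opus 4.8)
The plan is to deduce almost everywhere convergence from convergence in (outer) measure, using the Bourgain property as the bridge between pointwise and in-measure convergence. Throughout, write $E_h^\epsilon:=\{\omega\in\Omega: d(h(\omega),g(\omega))>\epsilon\}$ for $h\in\mathcal{H}$, $\epsilon>0$, and let $\mu^*$ be the outer measure generated by~$\mu$; note that the functions in~$\mathcal{H}$, and $g$ itself, are \emph{not} assumed to be $\mu$-measurable, so the sets $E_h^\epsilon$ need not lie in~$\Sigma$. The first, routine, reduction is that it suffices to prove the following approximation-in-measure statement, which I call $(\diamond)$:
$$
\text{for all }\epsilon>0,\ \eta>0\text{ there is }h\in\mathcal{H}\text{ with }\mu^*(E_h^\epsilon)<\eta.
$$
Granting $(\diamond)$, for each $n\in\N$ pick $h_n\in\mathcal{H}$ with $\mu^*(E_{h_n}^{1/n})<2^{-n}$; by countable subadditivity of~$\mu^*$ the set $\bigcap_{m}\bigcup_{n\ge m}E_{h_n}^{1/n}$ is $\mu$-null, and for every $\omega$ outside it one has $d(h_n(\omega),g(\omega))\le 1/n$ for all large~$n$, so $d(h_n(\omega),g(\omega))\to 0$. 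This is the desired sequence.

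Since $g$ lies in the $\mathfrak{T}_p$-closure of~$\mathcal{H}$, fix a net $(h_\lambda)$ in~$\mathcal{H}$ with $d(h_\lambda(\omega),g(\omega))\to 0$ for every $\omega\in\Omega$; then $(\diamond)$ says exactly that this net converges to~$g$ in outer measure. The guiding principle is the classical fact that on a finite measure space a pointwise convergent \emph{sequence of measurable functions} converges in measure: if the bad sets $\{d(f_n,f)>\epsilon\}$ had measure $\ge\eta$ infinitely often, their measures would sum to infinity, forcing $\limsup_n\{d(f_n,f)>\epsilon\}$ to carry positive measure and contradicting pointwise convergence. The Bourgain property is the tool that makes this mechanism available despite the two failures of the classical hypotheses here, namely that the index set is merely directed and that neither the $h$'s nor~$g$ are measurable. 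The key local device is as follows. First, $\mathcal{H}\cup\{g\}$ still has the Bourgain property: a finite family $\mathcal{B}$ witnessing it for~$\mathcal{H}$ at level~$\epsilon$ also witnesses it for~$g$, since if ${\rm osc}(g,B)>\epsilon$ on every $B\in\mathcal{B}$ one could select for each~$B$ a pair of points realizing oscillation~$>\epsilon$, and then a single net member close enough to~$g$ at these finitely many points would oscillate by more than~$\epsilon$ on every $B\in\mathcal{B}$, a contradiction. Hence for every $\epsilon>0$ and every $A\in\Sigma^+$ there is a finite $\mathcal{B}\sub\Sigma^+_A$ with $\min_{B\in\mathcal{B}}{\rm osc}(\phi,B)\le\epsilon$ for all $\phi\in\mathcal{H}\cup\{g\}$. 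Choosing one point $\omega_B\in B$ in each member and invoking pointwise convergence of the net at these finitely many points, one certifies, for a suitable~$\lambda$, a member $B\in\mathcal{B}$ on which ${\rm osc}(h_\lambda,B)$, ${\rm osc}(g,B)$ and $d(h_\lambda(\omega_B),g(\omega_B))$ are all $\le\epsilon$, whence $d(h_\lambda,g)\le 3\epsilon$ throughout~$B$ (harmless after rescaling~$\epsilon$). Thus inside any prescribed $A\in\Sigma^+$ some net member approximates~$g$ on a positive-measure piece.

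The hard part is to upgrade this purely local control into the global estimate~$(\diamond)$: the oscillation family furnished by the Bourgain property only guarantees good approximation on \emph{one} of its members at a time, so a single function is a priori controlled on a positive-measure piece rather than on most of~$\Omega$. The plan to overcome this is a maximal exhaustion carried out inside the $\sigma$-algebra~$\Sigma$ (where one \emph{can} exploit regularity and the finiteness of~$\mu$): repeatedly apply the local device to the measurable remainder, peeling off positive-measure pieces on which net members approximate~$g$, and combine this with the counting obstruction above to conclude that the outer measure of the bad region cannot remain bounded below along the net. Making this exhaustion interact correctly with the non-measurability of the sets $E_h^\epsilon$—so that the peeling is performed with genuinely measurable sets while the final smallness is asserted for the non-measurable bad sets through~$\mu^*$—is the crux of the argument. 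This is exactly the content of Bourgain's theorem as established, in the real-valued case, in \cite[Theorem~11]{rid-saa}; the passage to a general metric target changes nothing in the scheme, since only the triangle inequality and the oscillation functional are used.
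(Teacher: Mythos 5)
The paper offers no proof of this theorem to compare against in detail: it cites \cite[Theorem~11]{rid-saa} for the real-valued case and simply asserts that the proof ``can be adapted straightforwardly'' to metric-valued functions. Your attempt ultimately lands in the same place, since the step you yourself identify as the crux --- upgrading the local oscillation control to the global estimate $(\diamond)$ by a maximal exhaustion --- is not carried out but deferred to that same reference, together with the claim that the scheme transfers to metric targets. So in substance you follow the paper's route, and the scaffolding you add is mostly correct: the Borel--Cantelli reduction of a.e.\ convergence to $(\diamond)$ (countable subadditivity of $\mu^*$ makes this work with no measurability assumptions), and the argument that $g$ inherits the Bourgain property from~$\mathcal{H}$, are both right.

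Two corrections would make this airtight. First, your ``local device'' fails as written: the Bourgain property of $\mathcal{H}\cup\{g\}$ gives, for each function separately, \emph{some} $B\in\mathcal{B}$ of small oscillation, but the minimizing $B$ depends on the function, so nothing certifies a single $B\in\mathcal{B}$ on which ${\rm osc}(h_\lambda,B)$ and ${\rm osc}(g,B)$ are simultaneously small. The repair is to nest rather than to seek a common witness: since $\{g\}$ has the Bourgain property, first choose $C\in\Sigma^+_A$ with ${\rm osc}(g,C)\leq\epsilon$, then apply the Bourgain property of~$\mathcal{H}$ inside~$C$; every member of the resulting finite family controls $g$ automatically. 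Second, once you have this nesting you need not claim that the exhaustion argument of~\cite{rid-saa} ``changes nothing'' for metric targets, because the real-valued statement applies \emph{verbatim}: by the nesting and the triangle inequality $|d(h(\omega),g(\omega))-d(h(\omega'),g(\omega'))|\leq d(h(\omega),h(\omega'))+d(g(\omega),g(\omega'))$, the family $\{d(h(\cdot),g(\cdot)):h\in\mathcal{H}\}\subseteq\mathbb{R}^{\Omega}$ has the Bourgain property with respect to~$\mu$, and the zero function lies in its $\mathfrak{T}_p$-closure (witnessed by your net $(h_\lambda)$). The case $M=\mathbb{R}$ of the statement --- which is exactly what \cite[Theorem~11]{rid-saa} provides --- then yields a sequence $(h_n)$ in~$\mathcal{H}$ with $d(h_n(\omega),g(\omega))\to 0$ for $\mu$-a.e.\ $\omega\in\Omega$, which is the desired conclusion; this also bypasses $(\diamond)$ entirely. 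With these changes your write-up becomes a complete proof modulo the real-valued theorem, which is precisely the status of the paper's own treatment.
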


Given a compact topological space~$K$, we denote by $C(K)$ the Banach space of all real-valued
continuous functions on~$K$, equipped with the supremum norm. 
It is known that {\em for a norm-bounded set $\mathcal{H}\sub C(K)$ the following statements are equivalent:
\begin{enumerate}
\item[(i)] $\mathcal{H}$ has the Bourgain property with respect to any Radon probability on~$K$;
\item[(ii)] $\mathcal{H}$ contains no sequence equivalent to the usual unit basis of~$\ell_1$.
\end{enumerate}}
Indeed, a proof of (ii)$\impli$(i) can be found, e.g., in \cite[Proposition~4.15]{mus3}. On the other hand, 
(i) ensures that every $g\in \mathbb{R}^K$ belonging to the $\mathfrak{T}_p$-closure of~$\mathcal{H}$
is $\mu$-measurable for any Radon probability~$\mu$ on~$K$, thanks to Theorem~\ref{theo:Bourgain};
this condition is equivalent to~(ii) (see, e.g., \cite[Theorem~3.11]{van}). 

The following lemma is an application of the equivalence (i)$\Leftrightarrow$(ii) above.

\begin{lem}\label{lem:Bourgain}
Suppose that $X$ contains no subspace isomorphic to~$\ell_1$ and that $Y$ is finite dimensional. Then
$$
	\{T|_{B_{X^*}}: \, T\in \mathcal{F}_{w^*}(X^*,Y), \, \|T\|\leq 1\} \sub Y^{B_{X^*}}
$$
has the Bourgain property with respect to any~$\mu\in P(B_{X^*})$.
\end{lem}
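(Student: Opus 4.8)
The plan is to reduce the vector-valued Bourgain property to finitely many scalar instances and then invoke the equivalence (i)$\Leftrightarrow$(ii) recorded above for norm-bounded subsets of $C(K)$ with $K=(B_{X^*},w^*)$. Since $Y$ is finite dimensional, say $\dim Y=d$, I would fix a basis $y_1^*,\dots,y_d^*$ of~$Y^*$ and a constant $C\geq 1$ with $\|y\|_Y\leq C\max_{1\leq j\leq d}|y_j^*(y)|$ for all $y\in Y$ (equivalence of norms in finite dimension). For any $h:B_{X^*}\to Y$ and any $B$ this yields ${\rm osc}(h,B)\leq C\max_j {\rm osc}(y_j^*\circ h,B)$, so it suffices to control the scalar oscillations of the coordinate functions $y_j^*\circ h$.

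Next I would identify the scalar coordinate families. For $T\in \mathcal{F}_{w^*}(X^*,Y)$ written as $x^*\mapsto \sum_i x^*(x_i)y_i$, one has $y_j^*\circ T=\langle \cdot,x\rangle$ with $x=\sum_i y_j^*(y_i)x_i\in X$; hence $y_j^*\circ T|_{B_{X^*}}$ is the restriction to $B_{X^*}$ of the canonical image of~$x$, an element of $C(B_{X^*})$ of supremum norm $\|x\|_X\leq \|y_j^*\|$ whenever $\|T\|\leq 1$. Thus each family $\mathcal{H}_j:=\{y_j^*\circ T|_{B_{X^*}}:\, T\in \mathcal{F}_{w^*}(X^*,Y),\ \|T\|\leq 1\}$ is a norm-bounded subset of $C(B_{X^*})$ contained in the image of the canonical isometric embedding $\iota:X\to C(B_{X^*})$, $\iota(x)=\langle\cdot,x\rangle|_{B_{X^*}}$. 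Because $X$ contains no subspace isomorphic to~$\ell_1$ and $\iota$ is an isometry, $\mathcal{H}_j$ can contain no sequence equivalent to the usual unit basis of~$\ell_1$; by the equivalence (i)$\Leftrightarrow$(ii), each $\mathcal{H}_j$ has the Bourgain property with respect to~$\mu$.

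Finally I would assemble these $d$ scalar Bourgain properties into a single one for the $Y$-valued family by a coordinate-by-coordinate refinement. Given $\epsilon>0$ and $A\in \Sigma^+$, apply the Bourgain property of~$\mathcal{H}_1$ on~$A$ (with tolerance $\epsilon/C$) to obtain a finite set $\mathcal{B}_1 \sub \Sigma^+_A$; then apply the Bourgain property of~$\mathcal{H}_2$ on each $B\in \mathcal{B}_1$, take the union of the resulting finite collections, and continue in the same way through all $d$ coordinates. After $d$ steps the finite collection $\mathcal{B}\sub \Sigma^+_A$ has the property that for every $h=T|_{B_{X^*}}$ there is a single $B\in \mathcal{B}$, nested inside one chosen member at each stage, with ${\rm osc}(y_j^*\circ h,B)\leq \epsilon/C$ for all~$j$, whence ${\rm osc}(h,B)\leq \epsilon$.

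The main obstacle I anticipate is precisely this last step: one must produce a \emph{single} set $B$ that simultaneously controls all $d$ coordinates. This is handled by the nesting --- refining each set produced for the first coordinate by the sets produced for the second, and so on --- using that ${\rm osc}(\cdot,B)$ is monotone under set inclusion, so control achieved at an earlier stage persists on the smaller sets chosen later. Finite dimensionality of~$Y$ is essential here, since it caps the number of refinement stages at~$d$; the scalar inputs $\mathcal{H}_j$ are already furnished by the $C(K)$ equivalence together with the $\ell_1$-freeness of~$X$.
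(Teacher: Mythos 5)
Your proposal is correct and follows essentially the same route as the paper's proof: reduce the $Y$-valued oscillation to finitely many scalar coordinate families via a norm inequality, establish the Bourgain property for each coordinate family using the canonical embedding of~$X$ into $C(B_{X^*},w^*)$ together with the $\ell_1$-freeness and the stated $C(K)$ equivalence, and then combine the coordinates by nested refinement of partitions (your iterative refinement is exactly the content and the inductive proof of the paper's Sublemma~\ref{lem:max}). The only cosmetic differences are that the paper uses a basis of~$Y$ with biorthogonal functionals (with constant $\alpha=\sum_i\|y_i\|$) rather than a basis of~$Y^*$ with a norm-equivalence constant, and identifies $y_i^*\circ T$ as an element of~$X$ via $w^*$-continuity rather than via the explicit finite-rank representation.
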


For the proof of Lemma~\ref{lem:Bourgain} we need
an elementary observation:

\begin{sublem}\label{lem:max}
Let $(\Omega,\Sigma,\mu)$ be a probability space, $(M,d)$ be a metric space, and 
$\mathcal{H}_1,\dots,\mathcal{H}_p \sub M^{\Omega}$
be a finite collection of families having the Bourgain property with respect to~$\mu$. Then for each $\epsilon>0$ and each $A\in \Sigma^+$ there is a finite set
$\mathcal{B} \sub \Sigma_A^+$ such that 
$$
	\min_{B\in \mathcal{B}} \, \max_{i=1,\dots,p} \, {\rm osc}(h_i,B)\leq \epsilon
	\quad \text{for every $(h_1,\dots,h_p) \in \prod_{i=1}^p \mathcal{H}_i$.}
$$
\end{sublem}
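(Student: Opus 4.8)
The plan is to prove Sublemma~\ref{lem:max} by induction on the number~$p$ of families, reducing the general case to the single-family case that is exactly the Bourgain property itself. The base case $p=1$ is immediate: given $\epsilon>0$ and $A\in\Sigma^+$, the Bourgain property of~$\mathcal{H}_1$ directly supplies a finite set $\mathcal{B}\subseteq\Sigma_A^+$ with $\min_{B\in\mathcal{B}}{\rm osc}(h_1,B)\leq\epsilon$ for every $h_1\in\mathcal{H}_1$, and $\max_{i=1,\dots,1}=\mathrm{osc}(h_1,B)$.

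For the inductive step, suppose the statement holds for $p-1$ families. The key idea is a two-stage refinement: first apply the inductive hypothesis to control the first $p-1$ coordinates simultaneously, then refine each of the resulting sets using the Bourgain property of~$\mathcal{H}_p$ so that the $p$-th coordinate is also controlled. Concretely, given $\epsilon>0$ and $A\in\Sigma^+$, I would first use the inductive hypothesis to obtain a finite set $\mathcal{B}'\subseteq\Sigma_A^+$ such that $\min_{B'\in\mathcal{B}'}\max_{i=1,\dots,p-1}{\rm osc}(h_i,B')\leq\epsilon$ for every $(h_1,\dots,h_{p-1})\in\prod_{i=1}^{p-1}\mathcal{H}_i$. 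Then, for each individual set $B'\in\mathcal{B}'$, I apply the Bourgain property of~$\mathcal{H}_p$ with the same $\epsilon$ and with $A$ replaced by $B'$ (noting $B'\in\Sigma^+$, so this is legitimate), obtaining a finite set $\mathcal{B}_{B'}\subseteq\Sigma_{B'}^+$ with $\min_{B\in\mathcal{B}_{B'}}{\rm osc}(h_p,B)\leq\epsilon$ for every $h_p\in\mathcal{H}_p$. I then set $\mathcal{B}:=\bigcup_{B'\in\mathcal{B}'}\mathcal{B}_{B'}$, which is a finite subset of~$\Sigma_A^+$ since each $\mathcal{B}_{B'}\subseteq\Sigma_{B'}^+\subseteq\Sigma_A^+$.

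The verification that this $\mathcal{B}$ works requires a small care with the nesting of the minima. Fix any $(h_1,\dots,h_p)\in\prod_{i=1}^p\mathcal{H}_i$. By the inductive hypothesis there is some $B'\in\mathcal{B}'$ with $\max_{i=1,\dots,p-1}{\rm osc}(h_i,B')\leq\epsilon$. For this specific $B'$, the Bourgain property of~$\mathcal{H}_p$ gives some $B\in\mathcal{B}_{B'}$ with ${\rm osc}(h_p,B)\leq\epsilon$. Since $B\subseteq B'$, oscillation is monotone in the set, so ${\rm osc}(h_i,B)\leq{\rm osc}(h_i,B')\leq\epsilon$ for each $i=1,\dots,p-1$ as well. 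Hence this single $B\in\mathcal{B}$ satisfies $\max_{i=1,\dots,p}{\rm osc}(h_i,B)\leq\epsilon$, which gives $\min_{B\in\mathcal{B}}\max_{i=1,\dots,p}{\rm osc}(h_i,B)\leq\epsilon$ as required.

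I do not expect a serious obstacle here; the only point demanding attention is the monotonicity of oscillation under passing to a subset, $B\subseteq B'\implies{\rm osc}(h,B)\leq{\rm osc}(h,B')$, which is immediate from the definition ${\rm osc}(h,B)=\sup\{d(h(\omega),h(\omega')):\omega,\omega'\in B\}$ since shrinking the set shrinks the supremum. The structural subtlety worth flagging is that one cannot simply intersect sets from the $p$ separate collections, because the single set witnessing the small oscillation of~$h_i$ depends on~$h_i$ and these witnesses need not coincide across coordinates; the hierarchical refinement—choosing $B'$ first, then refining within~$B'$—is precisely what forces one common set~$B$ to work for all $p$ functions at once.
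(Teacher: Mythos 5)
Your proof is correct and follows essentially the same route as the paper's: induction on~$p$, applying the inductive hypothesis to $\mathcal{H}_1,\dots,\mathcal{H}_{p-1}$, refining each resulting set via the Bourgain property of~$\mathcal{H}_p$, and taking the union of the refinements. The only difference is that you spell out the verification (in particular the monotonicity of oscillation under passing to subsets) that the paper dismisses with ``it is clear,'' which is a reasonable thing to make explicit.
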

\begin{proof}
We proceed by induction on~$p$. The case $p=1$ is obvious. Suppose that $p>1$ and that the statement holds for~$p-1$ families. 
Fix $\epsilon>0$ and $A\in \Sigma^+$. By the inductive hypothesis, 
there is a finite set $\mathcal{C} \sub \Sigma_A^+$ such that
$$
	\min_{C\in \mathcal{C}} \, \max_{i=1,\dots,p-1} \, {\rm osc}(h_i,C)\leq \epsilon
	\quad
	\text{for every $(h_1,\dots,h_{p-1}) \in \prod_{i=1}^{p-1} \mathcal{H}_i$.}
$$
Since $\mathcal{H}_p$ has the Bourgain property with respect to~$\mu$,
for each $C\in \mathcal{C}$ there is a finite set $\mathcal{B}_C \sub \Sigma_C^+$ such that
$$
	\min_{B\in \mathcal{B}_C} {\rm osc}(h_p,B)\leq \epsilon \quad\text{for every  $h_p\in \mathcal{H}_p$.}
$$
It is clear that the set $\mathcal{B}:=\bigcup_{C\in \mathcal{C}}\mathcal{B}_C$ satisfies the required property.
\end{proof}

\begin{proof}[Proof of Lemma~\ref{lem:Bourgain}] The case $Y=\{0\}$ is obvious. Suppose that $Y$ has dimension $p\geq 1$ and
let $\{y_1,\dots,y_p\}$ be a basis of~$Y$ with biorthogonal functionals $\{y_1^*,\dots,y_p^*\} \sub Y^*$.
 
Fix $i\in \{1,\dots,p\}$. For each $T\in \mathcal{F}_{w^*}(X^*,Y)$ the composition $y_i^*\circ T\in X^{**}$ is $w^*$-continuous, hence
it belongs to~$X$. Bearing in mind the canonical isometric embedding of~$X$ into~$C(B_{X^*},w^*)$
and the fact that $X$ contains no subspace isomorphic to~$\ell_1$, it follows that 
$$
	\mathcal{H}_i:=\{y_i^*\circ T|_{B_{X^*}}: \, \, T\in \mathcal{F}_{w^*}(X^*,Y), \, \|T\|\leq 1\}
$$
is a norm-bounded subset of $C(B_{X^*},w^*)$ containing no sequence equivalent to the usual unit basis of~$\ell_1$. According to the comments preceding Lemma~\ref{lem:Bourgain},
$\mathcal{H}_i$ has the Bourgain property with respect to any $\mu\in P(B_{X^*})$. 

Set $\alpha:=\sum_{i=1}^p \|y_i\|$. Observe that for each function $h:B_{X^*}\to Y$ and each $B\sub B_{X^*}$ we have
\begin{multline}\label{eqn:norm-inequality}
	{\rm osc}(h,B)=\sup_{x_1^*,x_2^*\in B}\Big\|\sum_{i=1}^p \big(y_i^*(h(x_1^*))-y_i^*(h(x_2^*))\big) \, y_i\Big\| 
	\\ \leq 
	\sup_{x_1^*,x_2^*\in B}\, \alpha \cdot \max_{i=1,\dots,p} \, \big|y_i^*(h(x_1^*))-y_i^*(h(x_2^*))\big|
	\leq \alpha\cdot \max_{i=1,\dots,p}{\rm osc}(y_i^*\circ h,B).
\end{multline}
Write $\Sigma:={\rm Borel}(B_{X^*},w^*)$ and fix $\mu\in P(B_{X^*})$. 
By Sublemma~\ref{lem:max} applied to the families $\mathcal{H}_1,\dots,\mathcal{H}_p$, for each $\epsilon>0$ and 
each $A\in \Sigma^+$ there is a finite set $\mathcal{B}\sub \Sigma_A^+$ such that 
$$
	\min_{B\in \mathcal{B}} \, {\rm osc}(T|_{B_{X^*}},B)
	\stackrel{\eqref{eqn:norm-inequality}}{\leq}
	\alpha\cdot \min_{B\in \mathcal{B}} \, \max_{i=1,\dots,p}\, {\rm osc}(y_i^*\circ T|_{B_{X^*}},B)\leq \alpha\cdot \epsilon
$$
for every $T\in \mathcal{F}_{w^*}(X^*,Y)$ with $\|T\|\leq 1$.
This shows that 
$$
	\{T|_{B_{X^*}}: \, T\in \mathcal{F}_{w^*}(X^*,Y), \, \|T\|\leq 1\}
$$ 
has the Bourgain property with respect to~$\mu$.
\end{proof}

\subsection{Proof of Theorem~\ref{theo:Main}}

A key point in the argument will be the existence (under certain assumptions) of a suitable isometric embedding of 
$\mathcal{L}(X^*,Y)$ into the bidual of a Banach space, in such a way that 
the restriction of the $w^*$-topology to $\mathcal{L}(X^*,Y)$ coincides with the so-called weak operator topology.

\begin{defi}\label{dei:topologies}
Given two Banach spaces $E$ and~$F$, we denote by $\mathcal{L}(E,F)$ the Banach space of all operators from~$E$ to~$F$,
equipped with the operator norm. The {\em strong operator topology --SOT--} (resp., {\em weak operator topology --WOT--}) 
on $\mathcal{L}(E,F)$
is the locally convex topology for which the sets 
$$
	\{T\in \mathcal{L}(E,F): \, \|T(e)\|<\epsilon\} 
	\quad \text{where $e\in E$ and $\epsilon>0$}
$$ 
$$
	\mbox{(resp.,} \
	\{T\in \mathcal{L}(E,F): \, |\langle T(e),f^*\rangle|<\epsilon\} 
	\quad \text{where $e\in E$, $f^*\in F^*$ and $\epsilon>0$)}
$$
are a subbasis of open neighborhoods of~$0$. Therefore,
a net $(T_\alpha)$ in $\mathcal{L}(E,F)$ is SOT-convergent (resp., WOT-convergent) to~$T\in \mathcal{L}(E,F)$
if and only if $T_\alpha(e)\to T(e)$ in norm (resp., weakly) for every $e\in E$.

If $F=Z^*$ for a Banach space~$Z$, then the {\em weak$^*$ operator topology --W$^*$OT--} 
on $\mathcal{L}(E,F)$ is the locally convex topology for which the sets 
$$
	\{T\in \mathcal{L}(E,F): \, |\langle T(e),z\rangle|<\epsilon\}
	\quad\text{where $e\in E$, $z\in Z$ and $\epsilon>0$}
$$ 
are a subbasis of open neighborhoods of~$0$. Therefore, in this case a net $(T_\alpha)$ in $\mathcal{L}(E,F)$ is W$^*$OT-convergent 
to~$T\in \mathcal{L}(E,F)$ if and only if $T_\alpha(e)\to T(e)$ in the weak$^*$-topology for every $e\in E$. 
\end{defi}

\begin{lem}\label{lem:bidual}
Suppose that $Y$ is Asplund and that $Y^*$ has the approximation property. 
Let $Z:=X\hat{\otimes}_\epsilon Y$ be the injective tensor product of~$X$ and~$Y$. Then there is an isometric isomorphism
$\Phi: \mathcal{L}(X^*,Y^{**}) \to Z^{**}$ such that
\begin{enumerate}
\item[(i)] $Z$ is the norm-closure of $\Phi(\mathcal{F}_{w^*}(X^*,Y))$ and so
$$
	\Phi^{-1}(B_Z)=\overline{\{T\in \mathcal{F}_{w^*}(X^*,Y): \ \|T\|\leq 1\}}^{{\rm norm}};
$$
\item[(ii)] $\Phi$ is a $W^*OT$-to-$w^*$ homeomorphism on norm-bounded sets.
\end{enumerate}
\end{lem}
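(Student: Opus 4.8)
The plan is to obtain $\Phi$ as a composition of canonical isometric identifications and then to deduce (i) and~(ii) by unwinding the resulting pairing on elementary tensors. First I would compute $Z^*$. By Grothendieck's duality for the injective tensor product, $Z^*=(X\hat{\otimes}_\epsilon Y)^*$ is isometrically the space $\mathcal{I}(X,Y^*)$ of integral operators from~$X$ to~$Y^*$. Since $Y$ is Asplund, $Y^*$ has the Radon-Nikod\'ym property, so every integral operator into~$Y^*$ is nuclear, with equality of norms; and since $Y^*$ has the approximation property, the canonical metric surjection $X^*\hat{\otimes}_\pi Y^*\to\mathcal{N}(X,Y^*)$ is an isometric isomorphism. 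Chaining these gives an isometric identification $Z^*=X^*\hat{\otimes}_\pi Y^*$.

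Passing to duals and using the fundamental duality $(E\hat{\otimes}_\pi F)^*=\mathcal{L}(E,F^*)$ with $E=X^*$ and $F=Y^*$, I obtain isometric identifications $Z^{**}=(X^*\hat{\otimes}_\pi Y^*)^*=\mathcal{L}(X^*,Y^{**})$, and I let $\Phi$ be the inverse of this composition. Unwinding the pairings, $\Phi(T)$ is the functional on $Z^*=X^*\hat{\otimes}_\pi Y^*$ given on elementary tensors by
$$\langle \Phi(T),x^*\otimes y^*\rangle=\langle T(x^*),y^*\rangle \qquad (x^*\in X^*,\ y^*\in Y^*).$$
This explicit formula is what drives the rest of the argument.

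For~(i), recall that each $T\in\mathcal{F}_{w^*}(X^*,Y)$ is of the form $T(x^*)=\sum_{i=1}^n x^*(x_i)y_i$, i.e.\ it is the operator associated with $z=\sum_{i=1}^n x_i\otimes y_i\in X\otimes Y\sub Z$. Evaluating the displayed formula yields $\langle \Phi(T),x^*\otimes y^*\rangle=\sum_{i=1}^n x^*(x_i)y^*(y_i)=\langle x^*\otimes y^*,z\rangle$, so $\Phi(T)$ coincides with the canonical image of~$z$ in~$Z^{**}$. Thus $\Phi$ maps $\mathcal{F}_{w^*}(X^*,Y)$ onto the canonical copy of the algebraic tensor product $X\otimes Y$, which is norm-dense in $Z=X\hat{\otimes}_\epsilon Y$; hence $Z$ is the norm-closure of $\Phi(\mathcal{F}_{w^*}(X^*,Y))$. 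Because $\Phi$ is isometric, $\Phi^{-1}(Z)=\overline{\mathcal{F}_{w^*}(X^*,Y)}^{\,{\rm norm}}$, and intersecting with the unit ball gives $\Phi^{-1}(B_Z)=\overline{\{T\in\mathcal{F}_{w^*}(X^*,Y):\|T\|\le 1\}}^{\,{\rm norm}}$, using that the unit ball of a dense subspace is norm-dense in the unit ball of its closure.

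For~(ii), consider a norm-bounded family in $\mathcal{L}(X^*,Y^{**})$. Here $Y^{**}=(Y^*)^*$, so by Definition~\ref{dei:topologies} a net $(T_\alpha)$ converges to~$T$ in W$^*$OT if and only if $\langle T_\alpha(x^*),y^*\rangle\to\langle T(x^*),y^*\rangle$ for all $x^*\in X^*$ and $y^*\in Y^*$; by the displayed formula this is exactly convergence of $(\Phi(T_\alpha))$ to~$\Phi(T)$ when tested against the elementary tensors $x^*\otimes y^*$. Since $\Phi$ is isometric, $(\Phi(T_\alpha))$ is norm-bounded in~$Z^{**}$, and on norm-bounded sets $w^*$-convergence is detected by the norm-dense linear span of the elementary tensors in $Z^*=X^*\hat{\otimes}_\pi Y^*$; hence W$^*$OT-convergence of $(T_\alpha)$ is equivalent to $w^*$-convergence of $(\Phi(T_\alpha))$, which is precisely~(ii). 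The main obstacle is the isometric identification $Z^*=X^*\hat{\otimes}_\pi Y^*$: although each link is classical, one must invoke the hypotheses in the right places, namely the Radon-Nikod\'ym property of~$Y^*$ (equivalent to $Y$ being Asplund) for integral~$=$~nuclear, and the approximation property of~$Y^*$ for nuclear~$=$~projective tensor product, since it is exactly these two properties of~$Y$ that make the dual of the injective tensor product computable. Everything after that step is bookkeeping with canonical pairings.
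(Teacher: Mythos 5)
Your proof is correct and takes essentially the same route as the paper: the paper obtains the isometric identification $Z^*\cong X^*\hat{\otimes}_\pi Y^*$ by citing \cite[Theorem~16.40]{fab-ultimo} (whose standard proof is precisely your chain integral $=$ nuclear $=$ projective tensor product, using the Radon--Nikod\'{y}m property of~$Y^*$ and its approximation property), composes it with the trace duality $(X^*\hat{\otimes}_\pi Y^*)^*\cong\mathcal{L}(X^*,Y^{**})$, and declares (i) and~(ii) routine. Your explicit verifications of (i) and~(ii) -- matching $\Phi(T)$ with the canonical image of the corresponding tensor via the pairing on elementary tensors, and testing $w^*$-convergence of bounded nets against the dense span of elementary tensors -- are exactly the omitted routine checks, and they are sound.
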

\begin{proof}
Let $X^* \hat{\otimes}_\pi Y^*$ be the projective tensor product of~$X^*$ and~$Y^*$. Then there is
an isometric isomorphism $\eta: \mathcal{L}(X^*,Y^{**})\to (X^* \hat{\otimes}_\pi Y^*)^*$ such that 
$$
	\langle \eta(T),x^*\otimes y^* \rangle=
	\langle T(x^*),y^* \rangle
$$
for every $T\in \mathcal{L}(X^*,Y^{**})$, $x^*\in X^*$ and $y^*\in Y^*$; see, e.g., \cite[Proposition~16.16]{fab-ultimo} 
(this does not require the additional assumptions on~$Y$). On the other hand, 
since $Y$ is Asplund and $Y^*$ has the approximation property, there is an isometric isomorphism
$\xi: X^* \hat{\otimes}_\pi Y^* \to Z^*$ such that 
$$
	\langle \xi(x^*\otimes y^*),x\otimes y\rangle=
	x^*(x)y^*(y) 
$$	
for every $x\in X$, $y\in Y$, $x^*\in X^*$ and $y^*\in Y^*$ (see, e.g., \cite[Theorem~16.40]{fab-ultimo}). 
Now, it is routine to check that $\Phi:=(\xi^{-1})^*\circ \eta$
satisfies the required properties.
\end{proof}

We are now ready to prove a particular case of Theorem~\ref{theo:Main} dealing with finite rank operators.

\begin{lem}\label{lem:fd}
Suppose that $X$ contains no subspace isomorphic to~$\ell_1$. If an operator $T\in \mathcal{L}(X^*,Y)$
has finite rank, then for each $\mu\in P(B_{X^*})$ 
there is a sequence $(T_n)$ in $\mathcal{F}_{w^*}(X^*,Y)$ such that
$$
	\|T_n(x^*)-T(x^*)\|\to 0
	\quad\text{for $\mu$-a.e. $x^*\in B_{X^*}$}
$$ 
and $\|T_n\| \leq \|T\|$ for every $n\in \mathbb{N}$.
\end{lem}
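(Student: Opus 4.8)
The plan is to reduce to the case of a finite-dimensional codomain and then invoke the bidual representation of Lemma~\ref{lem:bidual} together with Goldstine's theorem to place $T$ in the right pointwise closure, so that Bourgain's theorem can extract an a.e.\ convergent sequence. First I would normalize: writing $c:=\|T\|$, the case $c=0$ is trivial, and otherwise it suffices to treat $T$ with $\|T\|\le 1$ and rescale the resulting sequence by~$c$ at the end (this preserves both the a.e.\ norm convergence and the bound $\|T_n\|\le \|T\|$). Since $T$ has finite rank, its range $Y_0:=T(X^*)$ is a finite-dimensional, hence norm-closed, subspace of~$Y$; I would regard $T$ as an operator $T:X^*\to Y_0$, noting that the operator norm is unchanged and that $\mathcal{F}_{w^*}(X^*,Y_0)\sub\mathcal{F}_{w^*}(X^*,Y)$ isometrically.

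The central step is to show that $T|_{B_{X^*}}$ lies in the $\mathfrak{T}_p$-closure of $\mathcal{H}:=\{S|_{B_{X^*}}:S\in\mathcal{F}_{w^*}(X^*,Y_0),\ \|S\|\le 1\}$, viewed in $Y_0^{B_{X^*}}$ with $Y_0$ carrying its norm. Because $Y_0$ is finite-dimensional, it is Asplund and $Y_0^*$ has the approximation property, so Lemma~\ref{lem:bidual} applies with $Y_0$ in place of~$Y$: setting $Z:=X\hat{\otimes}_\epsilon Y_0$ and using $Y_0^{**}=Y_0$, I obtain an isometric isomorphism $\Phi:\mathcal{L}(X^*,Y_0)\to Z^{**}$ for which $\Phi^{-1}(B_Z)$ is the norm-closure of $\{S\in\mathcal{F}_{w^*}(X^*,Y_0):\|S\|\le 1\}$ and $\Phi$ is a W$^*$OT-to-$w^*$ homeomorphism on norm-bounded sets. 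Since $\|T\|\le 1$ we have $\Phi(T)\in B_{Z^{**}}$, and Goldstine's theorem provides a net in $B_Z=\Phi(\Phi^{-1}(B_Z))$ that is $w^*$-convergent to~$\Phi(T)$. Pulling this net back through the homeomorphism in Lemma~\ref{lem:bidual}(ii) yields a net in $\Phi^{-1}(B_Z)$ that W$^*$OT-converges to~$T$, and hence $T$ lies in the W$^*$OT-closure of $\{S\in\mathcal{F}_{w^*}(X^*,Y_0):\|S\|\le 1\}$.

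I would then exploit finite-dimensionality once more: on $Y_0=Y_0^{**}$ the weak$^*$ topology coincides with the norm topology, so W$^*$OT-convergence of operators into~$Y_0$ is exactly pointwise norm convergence. Restricting to $B_{X^*}$, this says precisely that $T|_{B_{X^*}}$ belongs to the $\mathfrak{T}_p$-closure of~$\mathcal{H}$. Now Lemma~\ref{lem:Bourgain} (applicable since $X$ contains no copy of~$\ell_1$ and $Y_0$ is finite-dimensional) guarantees that $\mathcal{H}$ has the Bourgain property with respect to~$\mu$, so Theorem~\ref{theo:Bourgain} furnishes a sequence $(S_n)$ in $\{S\in\mathcal{F}_{w^*}(X^*,Y_0):\|S\|\le 1\}$ with $\|S_n(x^*)-T(x^*)\|\to 0$ for $\mu$-a.e.\ $x^*$. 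Rescaling ($T_n:=c\,S_n$) and using $\mathcal{F}_{w^*}(X^*,Y_0)\sub\mathcal{F}_{w^*}(X^*,Y)$ gives the desired sequence with $\|T_n\|\le\|T\|$.

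I expect the main obstacle to be obtaining the norm bound $\|T_n\|\le\|T\|$ rather than the bare a.e.\ convergence. A coordinatewise approach --- approximating each scalar functional $y_i^*\circ T\in X^{**}$ by elements of~$X$ via Odell--Rosenthal --- would readily give pointwise convergence but offers no control of the joint operator norm. The role of Lemma~\ref{lem:bidual} is exactly to encode the operator norm as the norm of the injective tensor product $Z=X\hat{\otimes}_\epsilon Y_0$, so that Goldstine's theorem keeps the approximating net inside $B_Z$, i.e.\ inside the operator unit ball; verifying that this bidual density transfers to genuine pointwise-norm (hence $\mathfrak{T}_p$) density, via the homeomorphism of Lemma~\ref{lem:bidual}(ii) and the collapse of topologies on the finite-dimensional~$Y_0$, is the delicate point of the argument.
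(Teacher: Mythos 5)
Your proof is correct and follows essentially the same route as the paper: reduce to the finite-dimensional range $Y_0=T(X^*)$, use Lemma~\ref{lem:bidual} and Goldstine's theorem to place $T|_{B_{X^*}}$ in the $\mathfrak{T}_p$-closure of the unit ball of $\mathcal{F}_{w^*}(X^*,Y_0)$ (exploiting that W$^*$OT collapses to pointwise norm convergence in finite dimensions), and then conclude via Lemma~\ref{lem:Bourgain} and Theorem~\ref{theo:Bourgain}. The paper phrases the topological collapse as W$^*$OT${}={}$SOT and works with SOT-closures, but this is the same argument.
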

\begin{proof} Clearly, we can assume without loss of generality that $\|T\|=1$ and that $Y$ is finite dimensional,
hence W$^*$OT=SOT on $\mathcal{L}(X^*,Y^{**})=\mathcal{L}(X^*,Y)$.
Let $Z$ and~$\Phi$ be as in Lemma~\ref{lem:bidual}. By Goldstine's theorem,
$B_Z$ is $w^*$-dense in~$B_{Z^{**}}$ and so
\begin{equation}\label{eqn:b1}
	\overline{\Phi^{-1}(B_Z)}^{{\rm SOT}}
	=B_{\mathcal{L}(X^*,Y)}.
\end{equation}
On the other hand, the set $\Gamma:=\{S\in \mathcal{F}_{w^*}(X^*,Y): \, \|S\|\leq 1\}$ satisfies
\begin{equation}\label{eqn:b2}
	\Phi^{-1}(B_Z)=\overline{\Gamma}^{{\rm norm}}.
\end{equation}
Bearing in mind that $B_{\mathcal{L}(X^*,Y)}$ is SOT-closed and that the norm topology is finer than SOT, 
from equalities~\eqref{eqn:b1} and~\eqref{eqn:b2} we get
$$
	\overline{\Gamma}^{{\rm SOT}}=B_{\mathcal{L}(X^*,Y)}.
$$
Therefore
\begin{equation}\label{eqn:in}
	T|_{B_{X^*}} \in \overline{\{S|_{B_{X^*}}: \, S\in \Gamma\}}^{\mathfrak{T}_p} \sub 
	Y^{B_{X^*}}.
\end{equation}
On the other hand, Lemma~\ref{lem:Bourgain} ensures that 
$\{S|_{B_{X^*}}:S\in \Gamma\}\sub Y^{B_{X^*}}$ has the Bourgain property with respect to~$\mu$.
From \eqref{eqn:in} and Theorem~\ref{theo:Bourgain}
it follows that there is a sequence $(T_n)$ in $\Gamma$ such that $\|T_n(x^*)-T(x^*)\|\to 0$ 
for $\mu$-a.e. $x^*\in B_{X^*}$.
\end{proof}

Theorem~\ref{theo:Main} will be a consequence of the following result.

\begin{theo}\label{theo:nol1-ap}
Suppose that $X$ contains no subspace isomorphic to~$\ell_1$ and that $Y$ has the approximation property.
Let $T\in \mathcal{L}(X^*,Y)$ and $\mu\in P(B_{X^*})$.
If $T|_{B_{X^*}}$ is strongly $\mu$-measurable, then 
there is a sequence $(T_n)$ in $\mathcal{F}_{w^*}(X^*,Y)$ such that 
$$
	\|T_n(x^*)-T(x^*)\|\to 0 \quad\text{for $\mu$-a.e. $x^*\in B_{X^*}$.}
$$
If $Y$ has the $\lambda$-bounded approximation property for some $\lambda\geq 1$, then the sequence
can be chosen in such a way that $\|T_n\|\leq \lambda\|T\|$ for all $n\in \N$.
\end{theo}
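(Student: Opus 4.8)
The plan is to reduce the general case to the finite-rank case already settled in Lemma~\ref{lem:fd}, using the approximation property of~$Y$ together with the tightness furnished by strong measurability, and then to glue the resulting two-parameter family into a single sequence by means of the diagonalization Lemma~\ref{lem:double}. Throughout I write $\Sigma:={\rm Borel}(B_{X^*},w^*)$ and $f:=T|_{B_{X^*}}$, which is strongly $\mu$-measurable by hypothesis.

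First I would construct a sequence of finite-rank operators $R_n\circ T$ converging to~$T$ $\mu$-almost everywhere on~$B_{X^*}$. Since $f$ is strongly $\mu$-measurable, Lemma~\ref{lem:tight} yields, for each $n\in\N$, a set $U_n\in\Sigma$ with $\mu(B_{X^*}\setminus U_n)\leq 2^{-n}$ such that $f(U_n)$ is relatively norm-compact; let $C_n:=\overline{f(U_n)}$, a norm-compact subset of~$Y$. The approximation property of~$Y$ then provides a finite-rank operator $R_n:Y\to Y$ with $\|R_n(y)-y\|\leq 1/n$ for all $y\in C_n$; in the $\lambda$-bounded case I would in addition demand $\|R_n\|\leq\lambda$. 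In particular $\|R_n(T(x^*))-T(x^*)\|\leq 1/n$ for every $x^*\in U_n$. Setting $U:=\bigcup_{m}\bigcap_{n\geq m}U_n$, the summability $\sum_n\mu(B_{X^*}\setminus U_n)<\infty$ gives $\mu(U)=1$, and for every $x^*\in U$ one has $\|R_n(T(x^*))-T(x^*)\|\to 0$. Writing $f_n:=(R_n\circ T)|_{B_{X^*}}$, this says $f_n\to f$ $\mu$-a.e.

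Now each $R_n\circ T$ has finite rank, since its range lies in the finite-dimensional space $R_n(Y)$, so Lemma~\ref{lem:fd} applies: there is a sequence $(T_{k,n})_{k}$ in $\mathcal{F}_{w^*}(X^*,Y)$ with $\|T_{k,n}(x^*)-R_n(T(x^*))\|\to 0$ as $k\to\infty$ for $\mu$-a.e.\ $x^*$, and with $\|T_{k,n}\|\leq\|R_n\circ T\|$. By comment~(a), each $f_{k,n}:=T_{k,n}|_{B_{X^*}}$ is strongly $\mu$-measurable. The array $(f_{k,n})$ satisfies hypothesis~(i) of Lemma~\ref{lem:double} with limits $(f_n)$, while $(f_n)$ satisfies~(ii) with limit $f$; hence Lemma~\ref{lem:double} produces indices $(k_n)$ with $\|T_{k_n,n}(x^*)-T(x^*)\|\to 0$ for $\mu$-a.e.\ $x^*$. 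The sequence $T_n:=T_{k_n,n}$ is the one we seek. For the norm bound, the $\lambda$-bounded choice of $R_n$ gives $\|T_n\|\leq\|R_n\circ T\|\leq\|R_n\|\,\|T\|\leq\lambda\|T\|$.

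The delicate point is the passage from the hypothesis, which only guarantees that $f(B_{X^*})$ is $\mu$-essentially separable, to sets on which the image is genuinely relatively norm-compact, since the unbounded approximation property can only be invoked on compact sets; this is exactly what Lemma~\ref{lem:tight} supplies, and the summable control $\mu(B_{X^*}\setminus U_n)\leq 2^{-n}$ is what forces the limit set~$U$ to have full measure. Everything else, namely the finite-rank Bourgain-property argument and the diagonal extraction, is discharged by Lemmas~\ref{lem:fd} and~\ref{lem:double}.
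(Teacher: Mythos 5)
Your proposal is correct and follows essentially the same route as the paper's own proof: Lemma~\ref{lem:tight} to extract sets of large measure with relatively norm-compact image, the approximation property to build finite-rank operators $R_n\circ T$ converging to $T$ $\mu$-a.e., Lemma~\ref{lem:fd} for each $R_n\circ T$, and Lemma~\ref{lem:double} for the diagonal extraction, with the $\lambda$-bound handled identically by choosing $\|R_n\|\leq\lambda$. The only (harmless) cosmetic difference is that you explicitly verify the strong $\mu$-measurability of the $T_{k,n}|_{B_{X^*}}$ via comment~(a), which the paper leaves implicit.
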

\begin{proof} Fix $n\in \N$. Since $T|_{B_{X^*}}$ is strongly $\mu$-measurable, 
we can apply Lemma~\ref{lem:tight} to find $B_n \in {\rm Borel}(B_{X^*},w^*)$ such that $\mu(B_n)\geq 1-2^{-n}$
and $T(B_n)$ is relatively norm-compact. Since $Y$ has the approximation property, there is a finite rank operator $L_n:Y\to Y$ such that
\begin{equation}\label{eqn:aprox}
	\sup_{x^*\in B_n}\|L_{n}(T(x^*)) - T(x^*)\| \leq \frac{1}{n}.
\end{equation}
Now, Lemma~\ref{lem:fd} applied to the finite rank operator $L_n\circ T \in \mathcal{L}(X^*,Y)$ ensures the existence of 
a sequence $(S_{k,n})_{k\in \N}$ in $\mathcal{F}_{w^*}(X^*,Y)$
such that 
$$
	\lim_{k\to \infty}\|S_{k,n}(x^*)-L_n(T(x^*))\|= 0
	\quad\text{for $\mu$-a.e. $x^*\in B_{X^*}$}
$$
and $\|S_{k,n}\|\leq \|L_n\circ T\|\leq \|L_n\|\|T\|$ for all $k\in \N$.

Define 
$$
	B:=\bigcup_{n\in \N}\bigcap_{m\geq n}B_m \in {\rm Borel}(B_{X^*}w^*). 
$$
Then $\mu(B)=1$ and $\|L_{n}(T(x^*))-T(x^*)\|\to 0$ as $n\to \infty$ for every $x^*\in B$ (by~\eqref{eqn:aprox}). Finally, we can use Lemma~\ref{lem:double} to obtain
a sequence $(k_n)$ in~$\N$ such that the operators $T_n:=S_{k_n,n}$ satisfy the required property.

If in addition $Y$ has the $\lambda$-bounded approximation property for some $\lambda\geq 1$, then we can assume that $\|L_n\|\leq \lambda$ 
and so $\|T_n\|\leq \lambda\|T\|$ for all $n\in \N$.
\end{proof}

\begin{proof}[Proof of Theorem~\ref{theo:Main}]
By Theorem~\ref{theo:nol1-ap} it suffices to check that $T|_{B_{X^*}}$ is strongly $\mu$-measurable.
In case~(ii) this follows from Lemma~\ref{lem:stronglymeasurableoperators}.
As to case~(i), if $X$ is Asplund, then the identity function $i: (B_{X^*},w^*)\to X^*$
is strongly $\mu$-measurable, according to a result of Schwartz (see, e.g., \cite[Corollary~7.8.7(a)]{bou-J} or
\cite[Theorem~16.28]{fab-ultimo}). Therefore, $T|_{B_{X^*}}=T\circ i$ is strongly $\mu$-measurable as well.
\end{proof}

\subsection{Further results}

Recall that a Banach space is said to have the {\em metric approximation property}
if it has the $1$-bounded approximation property.

The following example shows that the conclusion of Theorems~\ref{theo:Main} and~\ref{theo:nol1-ap}
can fail for arbitrary operators if $X$ is not Asplund. 

\begin{exa}\label{exa:JT}
Let $JT$ be the James tree space. Then $JT$ is separable, it does not contain subspaces isomorphic to~$\ell_1$
and $JT^*$ is not separable. Moreover:
\begin{enumerate}
\item[(i)] The bidual $JT^{**}$ has the metric approximation property, see \cite[Corollary~4.20]{bra}. Therefore, 
$JT^*$ has the metric approximation property as well (see, e.g., \cite[p.~244, Corollary~9]{die-uhl-J}). 
\item[(ii)] Since $JT$ is not Asplund, there is some $\mu\in P(B_{JT^*})$
for which the identity map $(B_{JT^*},w^*)\to JT^*$ is not strongly $\mu$-measurable (see, e.g., \cite[Corollary~7.8.7(a)]{bou-J}).
Hence there is no sequence $(T_n)$ in $\mathcal{F}_{w^*}(JT^*,JT^*)$ such that $\|T_n(x^*)-x^*\|\to 0$ for $\mu$-a.e. $x^*\in B_{JT^*}$
(see Lemma~\ref{lem:stronglymeasurableoperators}).
\end{enumerate}
\end{exa}

It is known that the dual of an Asplund space has the metric approximation property 
if and only if it has the approximation property (see, e.g., \cite[Theorem~16.66]{fab-ultimo}).
Therefore, Theorem~\ref{theo:Main} applied to the identity operator on~$X^*$ yields the following:

\begin{cor}\label{cor:dualAsplund}
Suppose that $X$ is Asplund and that $X^*$ has the approximation property.
Let $\mu\in P(B_{X^*})$. Then: 
\begin{enumerate}
\item[(i)] there is a sequence $S_n\in \mathcal{F}_{w^*}(X^*,X^*)$
such that $\|S_n(x^*)-x^*\|\to 0$ for $\mu$-a.e. $x^*\in B_{X^*}$ and $\|S_n\|\leq 1$ for all $n\in \N$;
\item[(ii)] for each $T\in \mathcal{L}(X^*,Y)$, the sequence $T_n:=T\circ S_n\in \mathcal{F}_{w^*}(X^*,Y)$
satisfies $\|T_n(x^*)-T(x^*)\|\to 0$ for $\mu$-a.e. $x^*\in B_{X^*}$ and $\|T_n\|\leq \|T\|$ for all $n\in \N$.
\end{enumerate}
\end{cor}

By arguing as in the introduction when we showed that Theorem~\ref{theo:Main} implies Theorem~\ref{theo:MS}, we get
the following corollary. This result is due to Mercourakis and Stamati except for the norm inequalities, 
see \cite[Proposition~2.18]{mer-sta}.

\begin{cor}\label{cor:Mercourakis-Stamati}
Suppose that $X^*$ is separable and has the approximation property.
Then: 
\begin{enumerate}
\item[(i)] there is a sequence $S_n\in \mathcal{F}_{w^*}(X^*,X^*)$
such that $\|S_n(x^*)-x^*\|\to 0$ for every $x^*\in X^*$ and $\|S_n\|\leq 1$ for all $n\in \N$;
\item[(ii)] for each $T\in \mathcal{L}(X^*,Y)$, the sequence $T_n:=T\circ S_n\in \mathcal{F}_{w^*}(X^*,Y)$
satisfies $\|T_n(x^*)-T(x^*)\|\to 0$ for every $x^*\in X^*$ and $\|T_n\|\leq \|T\|$ for all $n\in \N$.
\end{enumerate}
\end{cor}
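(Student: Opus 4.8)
The plan is to deduce this corollary from Corollary~\ref{cor:dualAsplund} by converting the $\mu$-a.e. convergence obtained there into genuine pointwise convergence, exactly as in the introduction when we passed from Theorem~\ref{theo:Main} to Theorem~\ref{theo:MS}. First I would check that the hypotheses of Corollary~\ref{cor:dualAsplund} are in force: since $X^*$ is separable it has the Radon-Nikod\'ym property, so $X$ is Asplund, and $X^*$ has the approximation property by assumption; these are precisely the two hypotheses of Corollary~\ref{cor:dualAsplund}.

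Next I would fix a norm-dense sequence $(x_k^*)_{k\in\N}$ in~$B_{X^*}$ and consider the discrete Radon probability on $(B_{X^*},w^*)$ defined, as in the introduction, by $\mu(A):=\sum_{x_k^*\in A}2^{-k}$ for $A\in {\rm Borel}(B_{X^*},w^*)$. Applying Corollary~\ref{cor:dualAsplund}(i) to this particular $\mu$ produces a sequence $S_n\in\mathcal{F}_{w^*}(X^*,X^*)$ with $\|S_n\|\leq 1$ for all $n\in\N$ and $\|S_n(x^*)-x^*\|\to 0$ for $\mu$-a.e. $x^*\in B_{X^*}$.

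The key step, and the only one carrying real content, is upgrading this a.e. statement to convergence at every point. Because $\mu$ assigns strictly positive mass to each $x_k^*$, no $\mu$-null set can contain any $x_k^*$, so necessarily $\|S_n(x_k^*)-x_k^*\|\to 0$ for every $k$; that is, convergence holds on the dense set $\{x_k^*\}$. For an arbitrary $x^*\in B_{X^*}$ and $\epsilon>0$, picking $x_k^*$ with $\|x^*-x_k^*\|<\epsilon$ and using $\|S_n\|\leq 1$ gives
$$
	\|S_n(x^*)-x^*\| \leq \|S_n(x^*-x_k^*)\| + \|S_n(x_k^*)-x_k^*\| + \|x_k^*-x^*\| \leq 2\epsilon + \|S_n(x_k^*)-x_k^*\|,
$$
so $\limsup_n\|S_n(x^*)-x^*\|\leq 2\epsilon$; letting $\epsilon\to 0$ and then scaling by homogeneity yields $\|S_n(x^*)-x^*\|\to 0$ for every $x^*\in X^*$, proving~(i).

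Finally, for~(ii) I would set $T_n:=T\circ S_n$. As a composition of the $w^*$-to-norm continuous finite rank map $S_n$ with the bounded operator~$T$, each $T_n$ is a $w^*$-to-norm continuous operator of finite rank, hence $T_n\in\mathcal{F}_{w^*}(X^*,Y)$; moreover $\|T_n\|\leq\|T\|\,\|S_n\|\leq\|T\|$, and $\|T_n(x^*)-T(x^*)\|=\|T(S_n(x^*)-x^*)\|\leq\|T\|\,\|S_n(x^*)-x^*\|\to 0$ for every $x^*\in X^*$ by part~(i). I do not anticipate a genuine obstacle: the density-plus-uniform-bound upgrade above is the crux, and everything else is formal.
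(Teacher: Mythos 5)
Your proposal is correct and follows essentially the same route as the paper: the paper also deduces this corollary from Corollary~\ref{cor:dualAsplund} (whose hypotheses hold since a separable dual forces $X$ to be Asplund) by using the discrete Radon probability $\mu(A)=\sum_{x_k^*\in A}2^{-k}$ built on a norm-dense sequence of~$B_{X^*}$, exactly as in the introduction's derivation of Theorem~\ref{theo:MS} from Theorem~\ref{theo:Main}. Your explicit density-plus-uniform-bound upgrade from $\mu$-a.e. to pointwise convergence is precisely the argument the paper leaves implicit.
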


We finish the paper with a result in the spirit of Theorem~\ref{theo:MS}
but considering convergence with respect to the weak topology. It applies
to the example given in \cite[Theorem 2.30]{mer-sta}.
 
\begin{theo}\label{theo:WOT}
Suppose that $X$ is separable and does not contain subspaces isomorphic to~$\ell_1$, and that
$Y^*$ is separable and has the approximation property. Let $T\in \mathcal{L}(X^*,Y)$. Then
there is a sequence $(T_n)$ in $\mathcal{F}_{w^*}(X^*,Y)$ which WOT-converges to~$T$
such that $\|T_n\|\leq \|T\|$ for all $n\in \N$.
\end{theo}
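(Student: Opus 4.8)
The plan is to reduce the weak-convergence statement to a scalar density argument via the isometric identification of $\mathcal{L}(X^*,Y)$ with a bidual, exactly as in Lemma~\ref{lem:bidual}. First I would observe that since $X$ is separable and $Y^*$ is separable, the space $Z:=X\hat{\otimes}_\epsilon Y$ is separable. The hypotheses of Lemma~\ref{lem:bidual} are in force here: $Y$ is Asplund (because $Y^*$ is separable, hence $Y$ has separable dual) and $Y^*$ has the approximation property by assumption. Thus there is an isometric isomorphism $\Phi:\mathcal{L}(X^*,Y^{**})\to Z^{**}$ which is a W$^*$OT-to-$w^*$ homeomorphism on bounded sets, and $Z=\overline{\Phi(\mathcal{F}_{w^*}(X^*,Y))}^{\,\mathrm{norm}}$. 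Since $X$ contains no copy of~$\ell_1$, the same holds for $Z$ (injective tensor products of $\ell_1$-free spaces are $\ell_1$-free), so by the Odell--Rosenthal theorem $Z$ is $w^*$-sequentially dense in $Z^{**}$, with the norm control $\|z_n\|\le\|z^{**}\|$.

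The key step is to apply Odell--Rosenthal to the element $z^{**}:=\Phi(T)\in Z^{**}$. This yields a sequence $(z_n)$ in $Z$ with $z_n\to z^{**}$ in the $w^*$-topology of $Z^{**}$ and $\|z_n\|\le\|z^{**}\|=\|T\|$. Transporting back through $\Phi$, and using that $Y=Y^{**}$ need not hold but $T$ takes values in $Y$, I would set $R_n:=\Phi^{-1}(z_n)$. By Lemma~\ref{lem:bidual}(i) each $z_n$ lies in the norm-closure of $\Phi(\mathcal{F}_{w^*}(X^*,Y))$, so $R_n$ is a norm-limit of elements of $\mathcal{F}_{w^*}(X^*,Y)$; since $\mathcal{F}_{w^*}(X^*,Y)$ is norm-closed in $\mathcal{L}(X^*,Y)$ (finite-rank $w^*$-to-norm continuous operators of the form $x^*\mapsto\sum x^*(x_i)y_i$ form a closed subspace when $Z$ is complete), each $R_n$ itself lies in $\mathcal{F}_{w^*}(X^*,Y)$. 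Because $\Phi$ is a W$^*$OT-to-$w^*$ homeomorphism on the bounded set where the $z_n$ live, the $w^*$-convergence $z_n\to\Phi(T)$ in $Z^{**}$ translates into W$^*$OT-convergence $R_n\to T$, i.e.\ $R_n(x^*)\to T(x^*)$ in the $w^*$-topology of $Y^{**}$ for every $x^*\in X^*$.

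The remaining step is to upgrade W$^*$OT-convergence in $Y^{**}$ to WOT-convergence in $Y$. Here I would use that all values $R_n(x^*)$ and $T(x^*)$ lie in $Y$, together with the fact that $Y$ is Asplund with separable dual: for fixed $x^*$, the $w^*$-convergence of $(R_n(x^*))$ to $T(x^*)$ in $Y^{**}$, combined with the norm bound $\|R_n(x^*)\|\le\|T\|\,\|x^*\|$, forces weak convergence in $Y$, because on bounded subsets of $Y$ the weak topology inherited from $Y^{**}$ (i.e.\ $\sigma(Y^{**},Y^*)$ restricted to $Y$, which is $\sigma(Y,Y^*)$) agrees with the $w^*$-topology of $Y^{**}$ restricted to $Y$. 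Thus $R_n(x^*)\to T(x^*)$ weakly in $Y$ for every $x^*$, which is precisely WOT-convergence $R_n\to T$, and the operators $T_n:=R_n$ satisfy $\|T_n\|=\|z_n\|\le\|T\|$.

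The main obstacle I anticipate is the last identification, namely verifying cleanly that W$^*$OT in $\mathcal{L}(X^*,Y^{**})$ restricts to WOT in $\mathcal{L}(X^*,Y)$ when the operators genuinely take values in~$Y$; one must be careful that $\Phi$ is defined on $\mathcal{L}(X^*,Y^{**})$ while $T$ and the approximants live in the subspace $\mathcal{L}(X^*,Y)$, and invoke the separability of $Z$ to legitimately apply the sequential (rather than net) Odell--Rosenthal density. The norm-bound $\|T_n\|\le\|T\|$ comes for free from the sharp form of Odell--Rosenthal and the isometry of~$\Phi$, so no separate argument is needed for it.
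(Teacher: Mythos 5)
Your overall route is the same as the paper's: Rosenthal's theorem plus separability gives that $Z:=X\hat{\otimes}_\epsilon Y$ is separable and contains no copy of~$\ell_1$, Odell--Rosenthal gives $w^*$-sequential density of $B_Z$ in~$B_{Z^{**}}$, and Lemma~\ref{lem:bidual} transports this to the operator setting; your final observation that W$^*$OT-convergence of $Y$-valued operators to a $Y$-valued operator is exactly WOT-convergence is also the paper's (implicit) last step, and in fact needs no boundedness or Asplund argument, since $\sigma(Y^{**},Y^*)$ restricted to~$Y$ \emph{is} $\sigma(Y,Y^*)$. However, one step of your argument is genuinely false: the claim that $\mathcal{F}_{w^*}(X^*,Y)$ is norm-closed in $\mathcal{L}(X^*,Y)$, from which you conclude that $R_n:=\Phi^{-1}(z_n)$ lies in $\mathcal{F}_{w^*}(X^*,Y)$. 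Every element of $\mathcal{F}_{w^*}(X^*,Y)$ has finite rank (as recalled in the introduction of the paper), so this set is never norm-closed when $X$ and $Y$ are infinite dimensional: by Lemma~\ref{lem:bidual}(i) its norm-closure is all of $\Phi^{-1}(Z)$, which contains infinite-rank compact operators. Thus your $R_n$, an arbitrary element of $\Phi^{-1}(Z)$ with $\|R_n\|\leq\|T\|$, need not be $w^*$-to-norm continuous, and your sequence need not lie in $\mathcal{F}_{w^*}(X^*,Y)$ at all.

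The gap is easy to repair, and the repair is exactly why Lemma~\ref{lem:bidual}(i) is stated in terms of closures of bounded sets: since $\Phi^{-1}(\|T\|B_Z)$ is the norm-closure of $\{S\in \mathcal{F}_{w^*}(X^*,Y):\ \|S\|\leq\|T\|\}$, you may choose $S_n\in \mathcal{F}_{w^*}(X^*,Y)$ with $\|S_n\|\leq\|T\|$ and $\|S_n-R_n\|\leq 1/n$; then $S_n\to T$ in WOT, because a norm-perturbation tending to zero does not change WOT-limits, and the $S_n$ satisfy the required norm bound. A second, smaller inaccuracy: your justification that $Z$ contains no copy of~$\ell_1$ (``injective tensor products of $\ell_1$-free spaces are $\ell_1$-free'') is not what Rosenthal's theorem says; the result invoked in the paper (\cite[Corollary~4]{ros07}) requires one of the factors to be Asplund. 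This is harmless here, since $Y^*$ is separable and hence $Y$ is Asplund (a fact you had already noted for Lemma~\ref{lem:bidual}), but the hypothesis should be quoted correctly, as the unrestricted statement is not available in the literature.
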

\begin{proof}
Since $X$ contains no subspace isomorphic to~$\ell_1$ and $Y$ is Asplund, their injective tensor product
$Z:=X\hat{\otimes}_\epsilon Y$ contains no subspace isomorphic to~$\ell_1$, according to a result
by Rosenthal (see \cite[Corollary~4]{ros07}). Since $X$ and $Y$ are separable, so is~$Z$. Thus, by the Odell-Rosenthal theorem~\cite{ode-ros}
(cf., \cite[Theorem~4.1]{van}), $B_Z$ is $w^*$-sequentially dense in~$B_{Z^{**}}$.
Bearing in mind Lemma~\ref{lem:bidual}, we conclude that $\{S \in \mathcal{F}_{w^*}(X^*,Y):\, \|S\|\leq 1\}$ 
is W$^*$OT-sequentially dense in $B_{\mathcal{L}(X^*,Y^{**})}$. 

Therefore, for a given $T\in \mathcal{L}(X^*,Y)$,
there is a sequence $(T_n)$ in $\mathcal{F}_{w^*}(X^*,Y)$ which WOT-converges to~$T$
such that $\|T_n\|\leq \|T\|$ for all $n\in \N$.
\end{proof}

\subsection*{Acknowledgements}
Research partially supported by {\em Agencia Estatal de Investigaci\'{o}n} [MTM2017-86182-P, grant cofunded by ERDF, EU] 
and {\em Fundaci\'on S\'eneca} [20797/PI/18].

\bibliographystyle{amsplain}

\begin{thebibliography}{10}

\bibitem{ari-alt}
J.~Arias~de Reyna, J.~Diestel, V.~Lomonosov, and L.~Rodr\'{\i}guez-Piazza,
  \emph{Some observations about the space of weakly continuous functions from a
  compact space into a {B}anach space}, Quaestiones Math. \textbf{15} (1992),
  no.~4, 415--425.

\bibitem{bou-J-2}
J.~Bourgain, \emph{Martingales in conjugate {B}anach spaces}, unpublished.

\bibitem{bou-J}
R.~D. Bourgin, \emph{Geometric aspects of convex sets with the
  {R}adon-{N}ikod\'ym property}, Lecture Notes in Mathematics, vol. 993,
  Springer-Verlag, Berlin, 1983.

\bibitem{bra}
R.~E. Brackebusch, \emph{James space on general trees}, J. Funct. Anal.
  \textbf{79} (1988), no.~2, 446--475.

\bibitem{cas-rod}
B.~Cascales and J.~Rodr{\'{\i}}guez, \emph{The {B}irkhoff integral and the
  property of {B}ourgain}, Math. Ann. \textbf{331} (2005), no.~2, 259--279.

\bibitem{coh-J}
D.~L. Cohn, \emph{Measure theory}, Birkh\"{a}user Advanced Texts: Basel Textbooks, Second Edition, Birkh\"{a}user/Springer, New York, 2013.

\bibitem{die-uhl-J}
J.~Diestel and J.~J. Uhl, Jr., \emph{Vector measures}, Mathematical Surveys, No. 15, American Mathematical Society, Providence, R.I., 1977.

\bibitem{fab-ultimo}
M.~Fabian, P.~Habala, P.~H{\'a}jek, V.~Montesinos, and V.~Zizler, \emph{Banach
  space theory. The basis for linear and nonlinear analysis}, CMS Books in Mathematics/Ouvrages de Math\'ematiques de la
  SMC, Springer, New York, 2011.

\bibitem{hay-J}
R.~Haydon, \emph{Some more characterizations of {B}anach spaces containing
  {$l\sb{1}$}}, Math. Proc. Cambridge Philos. Soc. \textbf{80} (1976), no.~2,
  269--276.

\bibitem{kal-spu}
O.~F.~K. Kalenda and J.~Spurn\'{y}, \emph{Baire classes of affine vector-valued
  functions}, Studia Math. \textbf{233} (2016), no.~3, 227--277.

\bibitem{mer-sta}
S.~Mercourakis and E.~Stamati, \emph{Compactness in the first {B}aire class and
  {B}aire-1 operators}, Serdica Math. J. \textbf{28} (2002), no.~1, 1--36.

\bibitem{mus3}
K.~Musia{\l}, \emph{Pettis integral}, Handbook of measure theory, Vol. I, II,
  North-Holland, Amsterdam, 2002, pp.~531--586. 

\bibitem{ode-ros}
E.~Odell and H.~P. Rosenthal, \emph{A double-dual characterization of separable
  {B}anach spaces containing {$l\sp{1}$}}, Israel J. Math. \textbf{20} (1975),
  no.~3-4, 375--384.

\bibitem{rid-saa}
L.~H. Riddle and E.~Saab, \emph{On functions that are universally {P}ettis
  integrable}, Illinois J. Math. \textbf{29} (1985), no.~3, 509--531.

\bibitem{rod-san}
J.~Rodr{\'{\i}}guez and E.~A. S\'{a}nchez-P\'{e}rez, \emph{A class of summing
  operators acting in spaces of operators}, preprint, arXiv:2003.07252.

\bibitem{ros07}
H.~P. Rosenthal, \emph{Some new characterizations of {B}anach spaces containing
  $\ell^1$}, unpublished preprint (2007), arXiv:0710.5944.

\bibitem{van}
D.~van Dulst, \emph{Characterizations of {B}anach spaces not containing {$l{\sp
  1}$}}, CWI Tract, vol.~59, Centrum voor Wiskunde en Informatica, Amsterdam, 1989.

\end{thebibliography}

\end{document}